\newtheorem{lemma}{Lemma}[section]
\newtheorem{thm}[lemma]{Theorem}
\newtheorem{prop}[lemma]{Proposition}
\newtheorem{cor}[lemma]{Corollary}
\theoremstyle{definition}
\newtheorem{defn}[lemma]{Definition}
\newtheorem{rem}[lemma]{Remark}
\theoremstyle{definition}
\definecolor{darkgreen}{cmyk}{1,0,1,.2}
\newcommand{\R} {\ensuremath {\mathbb{R}}}
\newcommand{\Z} {\ensuremath {\mathbb{Z}}}
\newcommand{\matP} {\ensuremath {\mathbb{P}}}
\newcommand{\h}{\hookrightarrow_{h}}
\newcommand{\e}{\varepsilon }
\newcommand{\nei}[2]{\mathcal{N}_{#2}(#1)}
\address{Mathematic Department, Technion, Haifa, 32000, Israel}
\email{hartnick@tx.technion.ac.il}
\address{Department of Mathematics, ETH Zurich, 8092 Zurich, Switzerland}
\email{sisto@math.ethz.ch}
\begin{document}

\title[Bounded cohomology and virtually free hyperbolically embedded subgroups]{Bounded cohomology and virtually free hyperbolically embedded subgroups}
\author{Tobias Hartnick}
\author{Alessandro Sisto}

\begin{abstract} Using a probabilistic argument we show that the second bounded cohomology of an acylindrically hyperbolic group $G$ (e.g., a non-elementary hyperbolic or relatively hyperbolic group, non-exceptional mapping class group, ${\rm Out}(F_n)$, \dots) embeds via the natural restriction maps into the inverse limit of the second bounded cohomologies of its virtually free subgroups, and in fact even into the inverse limit of the second bounded cohomologies of its hyperbolically  embedded virtually free subgroups.
This result is new and non-trivial even in the case where $G$ is a (non-free) hyperbolic group. The corresponding statement fails in general for the third bounded cohomology, even for surface groups. 
\end{abstract}

\maketitle

 \section{Introduction}
Thirty-five years after Gromov's groundbreaking paper \cite{Gromov} which jump-started the subject, bounded cohomology of discrete groups is by-and-large as computationally elusive at is was back then. We still do not know the bounded cohomology ring of a single group, except for those where it is trivial. Much work has been done on the second bounded cohomology of groups with various weak negative curvature properties over the last 20 years, in particular establishing infinite-dimensionality of $H^2_b(G; \R)$ for free groups, surface groups, non-elementary hyperbolic and relatively hyperbolic groups, non-exceptional mapping class groups, ${\rm Out}(F_n)$ and, generalizing all previous examples, acylindrically hyperbolic groups \cite{Brooks,BrooksSeries,EF, F, BF,HO}, but we are very far from a complete understanding of $H^2_b(G; \R)$ even for hyperbolic groups.  

By far the best understood case is that of a finitely-generated non-abelian free group $F$. In this case we know  due to a result of Grigorchuk \cite{Grigorchuk} that the Brooks counting quasimorphisms generate a dense subspace of $H^2_b(F; \R)$ (albeit only in the non-complete topology of pointwise convergence of homogeneous representatives). The linear relations between arbitrary Brooks counting quasimorphisms in $H^2_b(F; \R)$ and various explicit bases for the dense subspace they generate are known \cite{HartnickTalambutsa}, and there are some very modest beginnings of a study of the structure of $H^2_b(F; \R)$ as an ${\rm Out}(F)$-module \cite{HartnickSchweitzer, Hase}. Nothing comparable is known for any other groups, not even surface groups.

This note is a first attempt to understand the finer structure of $H^2_b(G; \R)$ for more general groups $G$ by relating $H^2_b(G; \R)$ to the bounded cohomology of certain (virtually) free subgroups of $G$. Given any collection $\mathcal S$ of subgroups of $G$, the restriction maps 
\[{\rm res}_H: H^2_b(G; \R) \to H^2_b(H; \R) \quad (H \in \mathcal S)\] combine into a map
\begin{equation}\label{iS}
{\rm res}_{\mathcal S}: H^2_b(G; \R) \to \underset{H \in \mathcal S}{\lim_{\leftarrow}} H^2_b(H; \R).
\end{equation}
The main idea behind our approach is that if one can show for some class $\mathcal S$ that this map is injective/surjective, then one obtains upper/lower bounds on $H^2_b(G; \R)$ in terms of the $H^2_b(H; \R)$ and maps between them. If $\mathcal S$ consists moreover of free groups, then one can use our structural knowledge about $H^2_b(F; \R)$ to study $H^2_b(G; \R)$. The same strategy can be applied to study the exact second bounded cohomology $EH^2_b(G; \R)$, i.e. the kernel of the comparison map $H^2_b(G; \R) \to H^2(G; \R)$.

We will study the map ${\rm res}_{\mathcal S}$ for groups $G$ which admit non-trivial hyperbolically embedded subgroups (and thus, a posteriori, hyperbolically embedded virtually \emph{free} subgroups) in the sense of Dahmani, Guirardel and Osin \cite{DGO}. The class of such groups is also known as acylindrically hyperbolic groups, and it is a very wide class \cite{Osin}. We will revisit the relevant definitions in Subsection \ref{SecAHG}. For the purposes of this introduction it is enough if the reader has in mind the case where $G$ is a Gromov-hyperbolic group. Then a subgroup $H < G$ is \emph{hyperbolically embedded} if it is malnormal and quasi-convex (because in this case $G$ is hyperbolic relative to $H$ \cite{Bowditch}). In general acylindrically hyperbolic groups, the notion will depend on a (typically infinite) generating set of $G$. Its relevance concerning bounded cohomology is apparent from the following extension result of Hull and Osin \cite{HO}.
\begin{thm}[Hull--Osin]
If $G$ is acylindrically hyperbolic and $\mathcal S=\{H\}$ for some hyperbolically embedded subgroup $H$, then ${\rm res}_{\mathcal S}: EH^2_b(G; \R) \to EH^2_b(H; \R)$ is surjective.
\end{thm}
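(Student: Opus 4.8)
The plan is to translate the statement out of bounded cohomology and into the language of quasimorphisms, and then to solve an extension problem geometrically. Recall that in degree two the exact bounded cohomology is computed by homogeneous quasimorphisms: there is a natural isomorphism $EH^2_b(G;\R)\cong \widetilde Q(G)$, where $\widetilde Q(G)=Q(G)/\mathrm{Hom}(G,\R)$ is the space of homogeneous quasimorphisms modulo homomorphisms, and under this identification the map $\res_{\mathcal S}$ becomes the restriction of quasimorphisms. Hence it suffices to show that every homogeneous quasimorphism $\phi$ on $H$ extends to a homogeneous quasimorphism $\bar\phi$ on $G$ whose restriction to $H$ agrees with $\phi$ up to a homomorphism $H\to\R$; the extended class is automatically exact, being represented by a quasimorphism.

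To build the extension I would use the geometry encoded by the hyperbolic embedding. By definition there is a (possibly infinite) generating set $X$ of $G$ with $H\h(G,X)$, so that the relative Cayley graph $\Gamma=\Gamma(G,X\sqcup H)$ is Gromov-hyperbolic and the associated relative metric $\hat d$ on $H$ is locally finite. For $g\in G$ I would fix a geodesic $p_g$ from $1$ to $g$ in $\Gamma$ and consider its \emph{$H$-components}, i.e. the maximal subpaths of $p_g$ labelled by elements of $H$; each such component reads off an element $h_i(g)\in H$. I would then set
\[
\bar\phi(g)\ :=\ \sum_i \phi\big(h_i(g)\big)
\]
and pass to the homogenisation $\bar\phi(g)=\lim_{n\to\infty}\bar\phi(g^n)/n$ at the end.

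The heart of the argument is to check that this recipe is well defined up to bounded error and defines a quasimorphism. The decisive input is the component-separation theory for hyperbolically embedded subgroups from \cite{DGO}: geodesics in $\Gamma$ penetrate cosets of $H$ in an essentially unique way, so that long $H$-components are \emph{isolated} and stable under perturbation of the path. Using this I would argue (a) that the sum defining $\bar\phi(g)$ is finite and changes by a uniformly bounded amount if $p_g$ is replaced by another geodesic; and (b) that for $g_1,g_2\in G$ a concatenation of $p_{g_1}$ and $p_{g_2}$ is a quasigeodesic from $1$ to $g_1g_2$ whose long components coincide, up to bounded error and the merging of at most one pair near the concatenation point, with those of $p_{g_1g_2}$. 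Since $\phi$ is a quasimorphism, the merging of two components $h,h'$ into $hh'$ costs only the bounded defect $|\phi(hh')-\phi(h)-\phi(h')|$, and local finiteness of $\hat d$ bounds the contribution of the finitely many \emph{short} components that can appear. Together these yield a uniform bound on $|\bar\phi(g_1g_2)-\bar\phi(g_1)-\bar\phi(g_2)|$, so $\bar\phi$ is a quasimorphism. Its restriction to $H$ is $\phi$, because the geodesic from $1$ to $h\in H$ is the single $H$-edge labelled $h$, so $\bar\phi(h)=\phi(h)$ and, after homogenisation, $[\bar\phi]$ restricts to $[\phi]$.

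The step I expect to be the main obstacle is precisely (b): controlling the $H$-components of $p_{g_1g_2}$ in terms of those of $p_{g_1}$ and $p_{g_2}$. This is where the full strength of the isolation and bounded-penetration properties of hyperbolically embedded subgroups is needed, and where essentially all of the hyperbolic geometry is consumed; the quasimorphism and homomorphism bookkeeping, though necessary, is routine once this geometric stability of long components is established.
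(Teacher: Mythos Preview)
The paper does not prove this theorem: it is stated in the introduction with attribution to Hull and Osin and cited from \cite{HO}, and no argument is given anywhere in the paper. So there is no ``paper's own proof'' to compare your proposal against.

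That said, your sketch is essentially the Hull--Osin strategy: reduce surjectivity on $EH^2_b$ to an extension problem for homogeneous quasimorphisms, and build the extension by summing $\phi$ over the $H$-components of a geodesic in the relative Cayley graph $\Gamma(G, X\sqcup H)$, using the isolation/bounded-penetration machinery of \cite{DGO} to control how components behave in geodesic triangles. Your identification of step~(b) as the crux is accurate.

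One point to be careful about: your remark that ``local finiteness of $\hat d$ bounds the contribution of the finitely many short components'' is not quite the right mechanism. Local finiteness bounds $|\phi(h)|$ for each individual short component $h$, but a priori a geodesic can have many short components. What one actually uses is the stronger statement (see \cite[Prop.~4.13--4.14]{DGO}) that in a geodesic $n$-gon the \emph{total} $\hat d$-length of all isolated components is bounded by a constant times $n$; this simultaneously bounds their number and their individual sizes, and is what makes the bookkeeping go through. With that correction, your outline matches the published argument.
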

By choosing $H$ to be a virtually non-abelian free group this shows that $\dim EH^2_b(G; \R) = \infty$. However, in this case the kernel of $\iota_{\mathcal S}$ will always be infinite-dimensional unless $G$ itself is virtually free. More generally, we have the following theorem \cite{FPS}:
\begin{thm}[Frigerio--Pozzetti--Sisto]
Let $\mathcal{S}$ be a finite hyperbolically embedded family of infinite index virtually free subgroups of an acylindrically hyperbolic group $G$. Then the map ${\rm res}_{\mathcal S}$ from \eqref{iS} has infinite-dimensional kernel.  
\end{thm}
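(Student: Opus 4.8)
The plan is to produce an infinite-dimensional subspace of $\ker {\rm res}_{\mathcal{S}}$ consisting of classes of the form $[\delta\phi]$, where $\phi$ ranges over homogeneous quasimorphisms of $G$ that vanish identically on every member of $\mathcal{S}$. Recall that each homogeneous quasimorphism $\phi$ determines a class $[\delta\phi] \in EH^2_b(G;\R) \subseteq H^2_b(G;\R)$, that the assignment $\phi \mapsto [\delta\phi]$ has kernel exactly $\mathrm{Hom}(G,\R)$, and that ${\rm res}_H[\delta\phi] = [\delta(\phi|_H)]$. Consequently, if $\phi|_H \equiv 0$ for every $H \in \mathcal{S}$, then $[\delta\phi] \in \bigcap_{H \in \mathcal{S}} \ker {\rm res}_H = \ker {\rm res}_{\mathcal{S}}$. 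It therefore suffices to construct infinitely many homogeneous quasimorphisms on $G$, linearly independent modulo $\mathrm{Hom}(G,\R)$, each of which restricts to zero on each $H \in \mathcal{S}$.

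To build such quasimorphisms I would use the hyperbolically embedded structure directly. Writing $\mathcal{S} = \{H_\lambda\}$ and $\mathcal{H} = \bigsqcup_\lambda (H_\lambda \setminus \{1\})$, the relative Cayley graph $\Gamma = \Gamma(G, X \sqcup \mathcal{H})$ is Gromov-hyperbolic and carries an isometric $G$-action. The decisive observation is that every $H_\lambda$ acts with bounded orbits: each $h \in H_\lambda$ is a single generator in $\mathcal{H}$, so it moves the base vertex a distance at most $1$, and hence every element of $H_\lambda$ is elliptic for the action on $\Gamma$. I would then apply the Bestvina--Fujiwara counting construction in the form developed for WPD elements by Hull--Osin: a loxodromic WPD element $g$ for the $G$-action on $\Gamma$ gives rise to a homogeneous quasimorphism $\phi_g$, defined by counting signed occurrences of a long segment of the axis of $g$ along geodesics of $\Gamma$. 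Since $h^n$ remains in a bounded region of $\Gamma$ whenever $h \in H_\lambda$, the counts defining $\phi_g(h^n)$ stay bounded in $n$, and homogeneity forces $\phi_g|_{H_\lambda} \equiv 0$. Thus every quasimorphism obtained in this way lies automatically in $\ker {\rm res}_{\mathcal{S}}$.

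To finish, I would produce infinitely many linearly independent classes of this shape. Since the family is finite and of infinite index, it is non-degenerate, so the $G$-action on $\Gamma$ is non-elementary and admits a loxodromic WPD element; the usual ping-pong argument then yields infinitely many pairwise independent such elements $g_1, g_2, \dots$, and the Bestvina--Fujiwara--Hull--Osin analysis shows that the classes $[\delta\phi_{g_n}]$ span an infinite-dimensional subspace of $EH^2_b(G;\R)$. By the previous paragraph this subspace is contained in $\ker {\rm res}_{\mathcal{S}}$, which gives the theorem. I expect the main point requiring care to be precisely the non-elementarity of this particular action, and this is exactly where the infinite-index hypothesis is indispensable: it is what guarantees that the peripheral cosets do not fill up $G$ and that there is an abundance of loxodromic directions transverse to them. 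If instead some $H_\lambda$ had finite index, a transfer argument would make ${\rm res}_{H_\lambda}$ injective and the kernel would collapse, so the quantitative largeness of $G$ relative to $\mathcal{S}$ encoded by infinite index is genuinely driving the conclusion.
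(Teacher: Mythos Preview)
The paper does not contain a proof of this theorem: it is stated in the introduction as background and attributed to \cite{FPS}, with no argument given. So there is nothing in the present paper to compare your proposal against.

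That said, your sketch is along the lines of what \cite{FPS} actually does, and the mechanism you identify is the right one: homogeneous quasimorphisms built from loxodromic WPD data on the relative Cayley graph $\mathrm{Cay}(G, X \sqcup \mathcal H)$ vanish on the peripherals because each $H_\lambda$ sits at diameter~$1$ there, hence acts elliptically, and a homogeneous quasimorphism controlled by word length in $X \sqcup \mathcal H$ must vanish on such elements. The one place where your outline is genuinely thin is the sentence ``since the family is finite and of infinite index, it is non-degenerate, so the $G$-action on $\Gamma$ is non-elementary.'' This is the crux and is not automatic from the definition of hyperbolically embedded: one needs to know that the relative Cayley graph is unbounded and that $G$ contains independent loxodromics for this action. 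In \cite{DGO,FPS} this is obtained by enlarging the peripheral structure so as to incorporate a suitable virtually cyclic subgroup coming from the ambient acylindrical hyperbolicity of $G$; the infinite-index hypothesis is precisely what allows this enlargement. If you want your write-up to stand on its own you should make that step explicit rather than absorbing it into the word ``non-degenerate.''
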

It is then natural to ask whether one can recover $H^2_b(G; \R)$ using an infinite family of hyperbolically embedded virtually free subgroups. The following theorem, which is the main result of this note, answers this question in the positive. Given an acylindrically hyperbolic group $G$ we shall denote by ${\rm HE}_2(G)$ the collection of all virtually free-on-two-generators hyperbolically embedded subgroups of $G$.
\begin{thm}\label{ThmMain} Let $G$ be an acylindrically hyperbolic group and let 
$\mathcal S$ be a collection of subgroups of $G$ containing ${\rm HE}_2(G)$. Then the natural restriction maps
\[
{\rm res}_{\mathcal S}: H^2_b(G; \R) \to \underset{H \in \mathcal S}{\lim_{\leftarrow}} H^2_b(H; \R) \quad \text{and} \quad {\rm res}_{\mathcal S}: EH^2_b(G; \R) \to \underset{H \in \mathcal S}{\lim_{\leftarrow}} EH^2_b(H; \R) 
\]
are both injective.
\end{thm}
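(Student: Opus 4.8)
The plan is to prove the contrapositive: if $0 \neq \alpha \in H^2_b(G;\R)$, then $\res_H \alpha \neq 0$ for some $H \in {\rm HE}_2(G)$. Since enlarging $\mathcal S$ only shrinks $\ker \res_{\mathcal S}$, it suffices to take $\mathcal S = {\rm HE}_2(G)$. First I would record the reduction that unifies the two assertions: every $H \in {\rm HE}_2(G)$ is virtually free, so $H^2(H;\R)=0$, the comparison map for $H$ is an isomorphism onto $0$, and hence $H^2_b(H;\R)=EH^2_b(H;\R)$. Thus the two inverse limits in the statement coincide, and it is enough to prove injectivity of the first map; the second then follows by restricting to the exact subspace $EH^2_b(G) \subseteq H^2_b(G)$. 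It is worth noting that this also shows the detection is genuinely of the full class and cannot be reduced to the exact part: a non-exact $\alpha$ (such as a bounded Euler class on a surface group) has $\res_H \alpha$ living in $EH^2_b(H)$ yet typically nonzero, so virtually free subgroups must see the whole of $H^2_b(G;\R)$.

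Next I would fix an acylindrical action of $G$ on a hyperbolic space $X$ with loxodromic elements and an admissible probability measure $\mu$ on $G$, and pass to the boundary model of Burger--Monod: using that $(\partial X,\nu)$, with $\nu$ the harmonic measure, is an amenable, doubly ergodic $G$-space, one represents $\alpha$ by a $G$-invariant, essentially bounded alternating cocycle $\omega \colon (\partial X)^3 \to \R$. For $H \in {\rm HE}_2(G)$ the limit set $\Lambda H \subseteq \partial X$ is an $H$-equivariant copy of $\partial H$, and $\res_H \alpha$ is represented by the restriction of $\omega$ to $(\Lambda H)^3$; it vanishes precisely when that restriction is an $H$-coboundary on $(\Lambda H)^2$.

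The probabilistic heart of the argument would run two independent $\mu$-random walks to produce a pair of independent loxodromic elements whose large powers generate, with high probability, a subgroup $H=H(\underline w)$ that is virtually free on two generators and \he; this is where acylindricity/WPD and the genericity of ``independent'' loxodromic directions (in the spirit of Maher--Tiozzo and Dahmani--Guirardel--Osin) enter, and it realizes a random Cantor limit set $\Lambda H(\underline w) \subseteq \partial X$. The key coupling estimate I would establish is that for $\nu^{\otimes 3}$-almost every triple $(\xi_1,\xi_2,\xi_3)$ there is positive probability that the random subgroup satisfies $\{\xi_1,\xi_2,\xi_3\}\subseteq \Lambda H(\underline w)$ in general position, so that the random limit sets sweep out almost every boundary configuration.

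Finally I would assemble a global primitive for $\omega$ from the $H(\underline w)$-primitives supplied by the vanishing hypothesis, integrating them against the law of $\underline w$ conditioned on passing through a prescribed configuration; double ergodicity of the boundary action is what should force the gauge ambiguities of the local primitives to cancel, yielding a $G$-invariant $F \in L^\infty((\partial X)^2)$ with $dF=\omega$ and hence $\alpha=0$. The main obstacle is exactly this last step. ``Coboundary on each $\Lambda H$'' is strictly weaker than ``zero,'' and the sets $\Lambda H$ are $\nu$-null, so one can neither restrict $\omega$ naively nor conclude pointwise vanishing; the probabilistic genericity of the random subgroups, together with a measurable selection of consistently gauged primitives, is what must bridge this gap, and making the averaging simultaneously well defined and $G$-equivariant is the crux of the proof.
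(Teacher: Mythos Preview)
Your first paragraph is fine and matches the paper's reduction: since every $H\in{\rm HE}_2(G)$ is virtually free, $H^2_b(H;\R)=EH^2_b(H;\R)$, so the two maps have the same kernel and one injectivity statement implies the other.

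The boundary-model strategy, however, has a genuine gap exactly where you flag it. The limit set $\Lambda H$ of any $H\in{\rm HE}_2(G)$ is $\nu$-null (these are infinite-index subgroups; their limit sets are Cantor sets missed by the harmonic measure), so the restriction of an $L^\infty$-class $\omega$ to $(\Lambda H)^3$ is not well defined: $\omega$ is only determined $\nu^{\otimes 3}$-a.e., and you are proposing to evaluate it on a null set. For the same reason the claim ``$\res_H\alpha$ is represented by $\omega|_{(\Lambda H)^3}$'' has no content as stated. Your proposed fix --- averaging local primitives over the law of the random subgroup --- does not get off the ground, because the primitives $F_H$ live only on null sets, are defined only up to $H$-invariant functions (not just constants), and there is no measurable selection or cancellation mechanism in sight that would produce a globally defined $G$-invariant $F\in L^\infty((\partial X)^2)$. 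Double ergodicity tells you that a \emph{global} $G$-invariant $F$ is essentially constant, but it says nothing about patching together a family of $H$-invariant functions indexed by random subgroups.

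The paper avoids this entirely by working on the group rather than the boundary, and by choosing a representative whose restriction \emph{is} zero (not merely a coboundary) whenever the restricted class vanishes. Bouarich's theorem produces a unique \emph{restriction-homogeneous} bounded $2$-cocycle $c\in Z^2_b(G;\R)$ representing $\alpha$; for any virtually free $H$, one has $[c|_H]=0$ in $H^2_b(H;\R)$ if and only if $c|_H\equiv 0$ pointwise. This converts ``$\res_H\alpha=0$'' from a global coboundary condition into a pointwise one. The paper then shows by an elementary combinatorial/probabilistic argument (propagation of ``twisted triangles'' via the cocycle identity, plus laziness of the walk) that for two independent lazy simple random walks $(x_n),(y_m)$ one has $\matP(|c(x_n,y_m,e)|\ge\epsilon)\ge\epsilon$ for all large $n,m$. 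Combining this with the Maher--Sisto theorem that $\langle x_n,y_m,K(G)\rangle\in{\rm HE}_2(G)$ with probability tending to $1$ yields a specific $H\in{\rm HE}_2(G)$ with $c|_H\not\equiv 0$, hence $\res_H\alpha\neq 0$. The crucial idea you are missing is the use of a canonical cocycle representative on $G$ for which vanishing of the restricted \emph{class} is equivalent to vanishing of the restricted \emph{cocycle}; once you have that, no boundary model or gluing of primitives is needed.
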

In fact, we will prove a stronger statement in Theorem \ref{MainRefinement}, where we fix the generating set with respect to which the subgroups we consider are hyperbolically embedded. We emphasize that the theorem is non-trivial even if one chooses $\mathcal S$ as the collection of all virtually free subgroups.

It would be of great interest to determine the images of these maps. A necessary condition for $(\alpha_H)_{H \in \mathcal S}$ to lie in the image of $\iota_{\mathcal S}$ is that the Gromov norm of the cocycles $\alpha_H$ are bounded uniformly over $H \in \mathcal S$, but this may hold automatically for a sufficiently large collection $\mathcal S$. We are not aware of any other obvious obstructions for a family of cohomology classes in the inverse limit to extend.

To prove Theorem \ref{ThmMain} we have to show that a given cohomology class $\alpha \in H^2_b(G; \R)$ restricts non-trivially to some $F \in {\rm HE}_2(G)$. Our proof is based on two ideas. The first idea is to choose $F$ at random. Here we rely heavily on a theorem of Maher and Sisto \cite{MS} which guarantees that two random elements of $G$ virtually generate an element of ${\rm HE}_2(G)$ if chosen according to a lazy simple random walk on $G$. We then need to show that $\alpha$ restricts non-trivially to a random subgroup with positive probability. In general, it is hard to decide whether a given cohomology class restricts non-trivially to a subgroup. Indeed, we have to check whether the restriction of a  cocycle representing $\alpha$ is a coboundary, and this is not a pointwise, but a global condition. Checking such global conditions on random subgroups seems impossible. Our second idea is thus to circumvent this problem in degree $2$ by using the notion of a restriction-homogeneous $2$-cocycle, due to Bouarich \cite{Bouarich95} (who calls them ``homogeneous''). Bouarich proved that every bounded cohomology class $\alpha$ in degree $2$ is represented by a unique restriction-homogeneous $2$-cocycle $c$, and we will show that $\alpha$ restricts trivially to a free subgroup if and only if the restriction of $c$ vanishes. Non-vanishing of the cocycle $c$ is a pointwise condition, whose probability on a random subgroup can now be checked. 

We point out that in order to prove the version of Theorem \ref{ThmMain} for $EH^2_b(G)$, we only need the more classical notion of a homogeneous quasimorphism rather than that of a restriction-homogeneous cocycle.

 Bouarich's result has no counterpart in higher degrees, and thus our argument does not extend beyond degree $2$. In fact, it cannot extend in general for the following reason:
 \begin{prop}\label{PropDeg3} Let $\Gamma_g = \pi_1(\Sigma_g)$ be the fundamental group of a closed oriented surface of genus $g \geq 2$. Then there exists a class $\alpha \in H^3_b(\Gamma_g; \R)$ which restricts trivially to every free subgroup of $\Gamma_g$. In fact, such a class can be constructed as the restriction to the fiber of the volume class of a hyperbolic mapping torus.
 \end{prop}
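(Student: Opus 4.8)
The plan is to realize $\alpha$ as the restriction to the fiber of the volume class of a closed hyperbolic three-manifold fibering over the circle. Concretely, since $g \geq 2$ there is a pseudo-Anosov $\phi \in \mathrm{Mod}(\Sigma_g)$, and by Thurston's hyperbolization of fibered three-manifolds the mapping torus $M = M_\phi$ is a closed hyperbolic three-manifold fitting into a short exact sequence $1 \to \Gamma_g \to \pi_1(M) \to \Z \to 1$ in which $\Gamma_g$ is the fiber subgroup. Let $\omega_M \in H^3_b(\pi_1(M);\R)$ be its volume class; since $M$ is closed hyperbolic one has $\|\omega_M\| > 0$, so $\omega_M \neq 0$. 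I set $\alpha := \mathrm{res}^{\pi_1(M)}_{\Gamma_g}(\omega_M) \in H^3_b(\Gamma_g;\R)$.

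First I would establish that $\alpha \neq 0$. Because the quotient $\pi_1(M)/\Gamma_g \cong \Z$ is amenable, the Hochschild--Serre spectral sequence for bounded cohomology degenerates: the higher rows $H^p_b(\Z; H^q_b(\Gamma_g))$ vanish for $p \geq 1$, since $\Z$ is amenable and the coefficients are dual Banach modules. Hence the restriction map $H^3_b(\pi_1(M);\R) \to H^3_b(\Gamma_g;\R)$ is the edge homomorphism and identifies $H^3_b(\pi_1(M))$ with the $\phi$-invariants $H^3_b(\Gamma_g)^{\phi}$; in particular it is injective. Injectivity together with $\omega_M \neq 0$ gives $\alpha \neq 0$. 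Note this also confirms that $\alpha$ lies in $EH^3_b(\Gamma_g;\R) = H^3_b(\Gamma_g;\R)$, since $H^3(\Gamma_g;\R) = 0$.

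Next I would show that $\alpha$ restricts trivially to every free subgroup $F < \Gamma_g$. Every such $F$ has infinite index (a subgroup of a surface group is free if and only if it has infinite index), and by transitivity of restriction it suffices to prove that $\omega_M$ restricts trivially to $F$ viewed as a subgroup of $\pi_1(M)$. I pass to the associated cover $N_F = \mathbb{H}^3/F$, a complete hyperbolic three-manifold with $\pi_1(N_F) = F$ free; the restricted class is precisely its bounded volume class $[\omega_{N_F}]_b$, and I would prove it vanishes by exhibiting a bounded de Rham primitive of the hyperbolic volume form. The point is that, $F$ being free, $N_F$ is homotopy equivalent to a graph, so (invoking tameness) it has a compact core $C$ with nonempty boundary, whence $H^3(C;\R)=0$, together with product ends $\partial C \times [0,\infty)$. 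On $C$ the volume form admits a primitive because $H^3(C;\R)=0$, and this primitive is bounded since $C$ is compact; on each end the warped-product hyperbolic geometry lets one integrate the volume form in the $[0,\infty)$-direction, producing a primitive whose pointwise norm stays bounded, as the exponential growth of the volume density is exactly compensated by the exponential decay of the norm of two-forms. Patching these with cut-offs yields a globally bounded primitive, whence $[\omega_{N_F}]_b = 0$ and $\mathrm{res}_F \alpha = 0$.

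The main obstacle is this last step, and in particular making the bounded primitive uniform enough to handle \emph{all} free subgroups at once. For finitely generated $F$ the compact-core argument is essentially a computation, but the geometrically infinite (degenerate) ends and, more seriously, the infinitely generated subgroups --- such as the kernel of a homomorphism $\Gamma_g \to \Z$, whose limit set is all of $S^2$ --- require more care: here there is no finite compact core, and one must instead exhaust $F$ by finitely generated free subgroups and pass to a weak-$*$ limit of primitives whose norms are controlled by a constant depending only on the dimension. It is exactly this vanishing, forced by the one-dimensional homotopy type of $F$ (two below the dimension of $N_F$), that has no counterpart in degree two --- where, by the main theorem of this paper, the Euler class of $\Sigma_g$ does restrict non-trivially to suitable free subgroups --- and which makes the degree-three phenomenon possible.
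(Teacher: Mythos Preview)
Your overall strategy---restricting the bounded volume class of a closed hyperbolic mapping torus to the fiber $\Gamma_g$---is exactly the paper's. The arguments for the two halves of the statement, however, differ.

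For non-vanishing of $\alpha$, the paper cites Soma's theorem that for a (topologically tame) Kleinian group $G < {\rm PSL}_2(\mathbb C)$ the restricted volume class $\alpha_G$ is non-zero if and only if $G$ is geometrically infinite, and observes that a surface fiber group is geometrically infinite. Your argument via injectivity of restriction along a normal subgroup with amenable quotient $\Z$ is different but equally valid and standard.

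For the vanishing on free subgroups your approach is incomplete, and unnecessarily hard. The paper again appeals to Soma's characterization, combined with a theorem of Scott--Swarup: every infinite-index subgroup of the fiber of a closed fibered hyperbolic $3$-manifold is geometrically \emph{finite} as a Kleinian group. Since free subgroups of $\Gamma_g$ have infinite index, Soma then gives $\alpha|_F = 0$ directly. In particular the ``degenerate ends'' you worry about simply do not occur for these subgroups, so the delicate construction of bounded primitives you sketch (and do not finish) is not needed. Your route would amount to reproving special cases of Soma's theorem by hand; the paper's two citations settle the matter in one line.
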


This article is organized as follows: In Section \ref{SecBouarich} we explain our notation concerning (bounded) group cohomology and then recall Bouarich's notion of a restriction-homogeneous $2$-cocycle. We then show how non-triviality of restrictions of classes in $H^2_b$ to free subgroups translates into a pointwise condition when using restriction-homogeneous representatives. In Section \ref{sec:twisted} we show that the probability of a restriction-homogeneous $2$-cocycle to be positive on a triple of points is non-zero with positive probability along a lazy random walk. In Section \ref{SecLRMAHG} we recall the necessary background on hyperbolically embedded subgroups and acylindrically hyperbolic groups. We then use the aforementioned result of Maher and Sisto on lazy random walks in such groups to deduce our main result. Finally, in Section \ref{SecDegree3} we establish Proposition \ref{PropDeg3}.

\subsection*{Acknowledgments} This work was initiated during the conference ``Young geometric group theory IV'' in Spa (Belgium), 2015.

This material is based upon work supported by the National Science
Foundation under Grant No. DMS-1440140 while the authors were in
residence at the Mathematical Sciences Research Institute in Berkeley,
California, during the Fall 2016 semester.

T. H. was moreover partially supported by ISF grant No. 535/14.

\section{Restriction-homogeneous $2$-cocycles}\label{SecBouarich}

In this section we collect background material on bounded cohomology, and then we discuss restriction-homogeneous cocycles. The reader interested only in the $EH^2_b(G)$ case of Theorem \ref{ThmMain} can ignore the definition of a restriction-homogeneous cocycle and in later sections replace the term ``restriction-homogeneous cocycle'' by ``differential of a homogeneous quasimorphism''.

\subsection{Notations concerning (bounded) group cohomology}
Throughout this article we will use the following conventions and notations concerning (bounded) group cohomology (see e.g. \cite{Frigerio}). $G$ will always denote a countable group.
The group cohomology $H^\bullet(G; \R)$ of $G$ with (trivial) $\R$-coefficients can be computed by either the homogeneous or the inhomogeneous bar resolution. Cochains in the former resolution are given by $C^n(G) := {\rm Map}(G^{n+1}; \R)^G$ with differential given by 
\[
d^nc(g_0, \dots, g_{n+1}) := \sum_{j=0}^{n+1} (-1)^j c(g_0, \dots, \widehat{g_j}, \dots g_{n+1}).
\]
We use the notations $Z^n(G) := \ker(d^n)$ and $B^n(G) := {\rm im}(d^{n-1})$ 
for $n$-cocycles, respectively $n$-coboundaries in this resolution. The inhomogeneous bar resolution has cochains given by $\overline{C}^n(G) := {\rm Map}(G^n, \R)$ with differential given by 
\[
\delta f(g_1, \dots, g_{n+1}) := f(g_2, \dots, g_{n+1})-\sum_{j=1}^n (-1)^{j+1} f(g_1, \dots, g_jg_{j+1}, \dots, g_n) + (-1)^{n} f(g_1, \dots, g_n),
\]
and we denote $n$-cocycles and $n$-coboundaries in this resolution by $\overline{Z}^n(G) := \ker(\delta^n)$ and $\overline{B}^n(G) := {\rm im}(\delta^{n-1})$. The map
\begin{equation}\label{ChainIso}
\iota_\bullet: (C^\bullet(G), d^\bullet) \to (\overline{C}^\bullet(G), \delta^\bullet), \quad (\iota_n c)(g_1, \dots, g_n) := c(e, g_1, g_1g_2, \dots, g_1g_2 \cdots g_n)
\end{equation}
defines an isomorphism of cochain complexes with inverse given by $(\iota_n^{-1}f)(g_0, \dots, g_n):=f(g_0^{-1}g_1, \dots, g_{n-1}^{-1}g_n)$. In particular,
\[
H^n(G; \R) = H^n (C^\bullet(G), d^\bullet) = H^n (\overline{C}^\bullet(G), \delta^\bullet).
\]
We denote by $C^\bullet_b(G) \subset C^\bullet(G)$ and $\overline{C}^\bullet_b(G) \subset \overline{C}^\bullet(G)$ the respective subcomplexes of bounded cochains and define  ${Z}_b^n(G)$, ${B}_b^n(G)$, $\overline{Z}_b^n(G)$ and $\overline{B}_b^n(G)$ accordingly. Then $\iota_n$ restricts to an isomorphism $C^n_b(G) \to \overline{C}^n_b(G)$ and the bounded group cohomology of $G$ with (trivial) $\R$-coefficients is defined as
\[
H_b^n(G; \R) := H^n (C_b^\bullet(G), d^\bullet) = H^n (\overline{C}_b^\bullet(G), \delta^\bullet).
\]

\subsection{$2$-cocycles and quasimorphisms} 
Let us specialize the definitions of the previous subsection to the case $n=2$.
By definition, a function $c: G^3 \to \R$ is contained in $Z^2(G; \R)$ iff it is $G$-invariant and satisfies the \emph{cocycle identity}
\begin{equation}\label{Coboundary}
d^2c(g_0, g_1, g_2, g_3) = c(g_1, g_2, g_3) - c(g_0, g_2, g_3) + c(g_0, g_1, g_3) - c(g_0, g_1, g_2) = 0.
\end{equation}
It is contained in $Z^2_b(G; \R)$ if it is moreover bounded. Geometrically, the cocycle condition says that the sum of the values (with signs) of $c$ over the faces of any tetrahedron in the Milnor model of the classifying space of $G$ is $0$. Most of the lemmas in Section \ref{sec:twisted} were inspired by this geometric interpretation.

A function $h: G^2 \to \R$ is contained in $\overline{Z}^2(G; \R)$ provided
\[
\delta^2h(g_1, g_2, g_3) = h(g_2, g_3) - h(g_1g_2, g_3) + h(g_1, g_2g_3) - h(g_1, g_2) = 0.
\]
A function $f \in \overline{C}^1(G) = {\rm Map}(G; \R)$ is called a quasimorphism if $\delta^1f$ is bounded, and we denote the space of such functions by $Q(G)$. Note that
$\delta^1f(g_1, g_2) = f(g_2)-f(g_1g_2) + f(g_1)$ and thus $f \in Q(G)$ if and only if
\[
D(f) := \sup_{g_1, g_2 \in G} \, | f(g_1g_2) - f(g_1)-f(g_2)| < \infty.
\]

We then have an isomorphism \cite[Prop. 2.8]{Frigerio}
\[
i: \frac{Q(G)}{{\rm Hom}(G; \R) \oplus \ell^\infty(G)} \to EH^2_b(G; \R), \quad f \mapsto [\delta^1 f].
\]
A cocycle $c \in Z^2(G)$ representing $i(f)$ is given by $c_f := \iota_2^{-1}(\delta^1f)$ where $\iota_2$ is defined by \eqref{ChainIso}. Explicitly, 
\begin{equation}\label{cf}
c_f(g_0, g_1, g_2) = f(g_0^{-1}g_1) + f(g_1^{-1}g_2) + f(g_2^{-1}g_0).
\end{equation}

\subsection{Restriction-homogeneous $2$-cocycles} Recall that a quasimorphism $f \in Q(G)$ is called \emph{homogeneous} provided $f(g^n)=nf(g)$ for each $g\in G$ and $n\in\mathbb N$. We denote by $\mathcal H(G) \subset Q(G)$ the subspace of homogeneous quasimorphisms. Every $f \in \mathcal H(G)$ is automatically conjugation-invariant and restricts to a homomorphism on abelian (in fact, amenable) subgroups. In particular, $f(g^n) = nf(g)$ for all $n \in \mathbb Z$. Equivalently, 
\[
\delta^1 f(g^n, g^m) = f(g^m) - f(g^{n+m}) + f(g^n) = 0
\]
for all $m,n \in \Z$. In analogy with this case, Bouarich defined \cite{Bouarich95}:
\begin{defn}[Bouarich] An inhomogeneous $2$-cocycle $h \in \overline{Z}^2(G; \R)$ is called \emph{restriction-homogeneous} if it satisfies $h(g^n, g^m) = 0$ for all $g\in G$ and $m,n \in \Z$.
\end{defn}
Thus by definition a quasimorphism $f$ is homogeneous if and only if the bounded $2$-cocycle $h:=\delta^1f$ is restriction-homogeneous. 
\begin{rem} Bouarich calls ``homogeneous $2$-cocycle'' what we call a ``restriction-homogeneous $2$-cocycle'' here. We changed his original terminology to avoid
linguistically awkward constructions like ``homogeneous inhomogeneous $2$-cocycle''.
\end{rem}

\begin{lemma}\label{LemmaHomogeneous}
Given a group $G$ and $h \in \overline{Z}^2(G; \R)$, the following are equivalent.
\begin{enumerate}[(i)]
\item $h$ is restriction-homogeneous.
\item $h|_Z \equiv 0$ for every cyclic subgroup $Z < G$.
\item If $H < G$ and $[h|_H] \in EH^2_b(H; \R)$, then $h|_H = \delta^1 f$ for some $f \in \mathcal H(H)$.
\end{enumerate}
\end{lemma}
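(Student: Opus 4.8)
The plan is to prove the cycle of implications (i)$\,\Leftrightarrow\,$(ii)$\,\Rightarrow\,$(iii)$\,\Rightarrow\,$(ii), of which only the last two carry genuine content; the equivalence of (i) and (ii) is immediate from unwinding the definitions. Indeed, a cyclic subgroup is precisely a subgroup of the form $Z = \gen{g}$ for some $g \in G$, and its elements are exactly the powers $g^k$ with $k \in \Z$. Hence the condition $h|_Z \equiv 0$ asserts precisely that $h(g^n, g^m) = 0$ for all $m,n \in \Z$, and letting $g$ range over $G$ this is by definition restriction-homogeneity. Throughout I use that the restriction $h|_H$ is bounded whenever $[h|_H]$ is to define a class in $H^2_b(H;\R)$, which is the regime in which the lemma is applied.

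For (i)$\,\Rightarrow\,$(iii), suppose $h$ is restriction-homogeneous and let $H < G$ satisfy $[h|_H] \in EH^2_b(H;\R)$. Since $EH^2_b(H;\R)$ is by definition the kernel of the comparison map $H^2_b(H;\R) \to H^2(H;\R)$, the class $[h|_H]$ vanishes in ordinary cohomology, so $h|_H = \delta^1 f$ for some $f \in \mathrm{Map}(H;\R)$. As $h|_H$ is bounded, $\delta^1 f$ is bounded and therefore $f \in Q(H)$. The point is then that restriction-homogeneity of the cocycle is exactly the pointwise shadow of homogeneity of its primitive: for every $g \in H$ and $m,n \in \Z$ we have $0 = h(g^n, g^m) = \delta^1 f(g^n, g^m) = f(g^n) + f(g^m) - f(g^{n+m})$, so $k \mapsto f(g^k)$ is additive on $\Z$, whence $f(g^k) = k f(g)$. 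Thus $f \in \mathcal{H}(H)$, as required. (Any other primitive differs from $f$ by an element of $\mathrm{Hom}(H;\R) \subset \mathcal{H}(H)$, so homogeneity does not depend on the choice.)

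For (iii)$\,\Rightarrow\,$(ii), I would test (iii) on cyclic subgroups. Fix a cyclic subgroup $Z < G$. Since $Z$ is amenable, $H^2_b(Z;\R) = 0$, hence $EH^2_b(Z;\R) = 0$, and the restriction $[h|_Z]$ — which is defined because $h|_Z$ is bounded — lies automatically in $EH^2_b(Z;\R)$. Condition (iii) then furnishes $f \in \mathcal{H}(Z)$ with $h|_Z = \delta^1 f$. But $Z$ is abelian, and by the properties recalled before the definition every homogeneous quasimorphism restricts to a genuine homomorphism on an abelian subgroup; for such $f$ one has $\delta^1 f(a,b) = f(a) + f(b) - f(ab) = 0$. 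Therefore $h|_Z \equiv 0$, which is precisely (ii).

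The routine steps are the definitional equivalence (i)$\,\Leftrightarrow\,$(ii) and the standard quasimorphism bookkeeping. The one place requiring the right idea is recognizing, in both nontrivial implications, that homogeneity of the primitive quasimorphism is the pointwise reformulation of restriction-homogeneity of the cocycle, together with the use of amenability of cyclic subgroups to satisfy the hypothesis of (iii) at no cost. The only subtlety to keep in mind is the boundedness bookkeeping in the two directions, since the hypothesis of (iii) can only be invoked on subgroups to which $h$ restricts boundedly.
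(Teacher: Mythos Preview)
Your proof is correct and follows essentially the same approach as the paper: the equivalence (i)$\Leftrightarrow$(ii) is declared definitional, (ii)$\Rightarrow$(iii) proceeds by writing $h|_H=\delta^1 f$ and using vanishing of $h$ on cyclic subgroups to force $f\in\mathcal H(H)$, and (iii)$\Rightarrow$(ii) is obtained by testing on cyclic $Z$ and using $\mathcal H(Z)=\mathrm{Hom}(Z;\R)$. Your write-up is slightly more explicit (spelling out additivity of $k\mapsto f(g^k)$ and invoking amenability of $Z$), and you rightly flag the boundedness bookkeeping that the paper leaves implicit when applying (iii) to cyclic subgroups.
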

\begin{proof} (ii) is just a reformulation of (i), so let us show that (ii)$\Leftrightarrow$(iii). First assume (ii) and let $H<G$ as in (iii). Then $h|_H = \delta^1f$ for some $f \in Q(H)$. If $Z < H$ is cyclic, then $\delta^1f|_Z = h|_Z = 0$, hence $f \in \mathcal H(H)$ as claimed. Conversely, if (iii) holds and $Z < G$ is cyclic, then $h|_Z \in EH^2_b(Z; \R)$, hence by assumption $h|_Z = \delta^1 f$ for some $f \in \mathcal H(Z)$. But for a cyclic group we have $\mathcal H(Z) = {\rm Hom}(Z; \R)$ and thus $h|_Z = \delta^1 f = 0$. This finishes the proof.
\end{proof}
Since we prefer to work with cocycles in $Z^2(G; \R)$, let us call a cocycle $c \in Z^2(G; \R)$ \emph{restriction-homogeneous} if the corresponding cocycle $h = \iota_n(c) \in \overline{Z}^2(G; \R)$ is restriction-homogeneous. Then Lemma \ref{LemmaHomogeneous} translates as follows.
\begin{cor}\label{CorHomogeneous} Given a group $G$, a cocycle $c \in {Z}^2(G; \R)$ is restriction-homogeneous if and only if it satisfies one of the following properties.
\begin{enumerate}[(i)]
\item For all $m, n \in \Z$ and $g \in G$ we have $c(e,g^n, g^m) = 0$.
\item If $Z < G$ is cyclic, then $c|_Z = 0$.
\item If $H < G$ and $[c|_H] \in EH^2_b(H; \R)$, then $c|_H = c_f$ for some $f \in \mathcal H(H)$.\qed
\end{enumerate}
\end{cor}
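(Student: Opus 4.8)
The plan is to read off the Corollary from Lemma \ref{LemmaHomogeneous} by transporting its three conditions through the chain isomorphism $\iota_2$ of \eqref{ChainIso} and its inverse. By definition $c \in Z^2(G;\R)$ is restriction-homogeneous exactly when $h := \iota_2(c) \in \overline{Z}^2(G;\R)$ is, and Lemma \ref{LemmaHomogeneous} already establishes that its conditions (i)--(iii) are mutually equivalent. It therefore suffices to match each condition (i)--(iii) of the Corollary with the correspondingly numbered condition of Lemma \ref{LemmaHomogeneous}; the three equivalences then combine with the defining equivalence to yield the statement.

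For (i) I would begin from the explicit formula $h(g_1,g_2)=c(e,g_1,g_1g_2)$, which gives $h(g^n,g^m)=c(e,g^n,g^{n+m})$ for all $m,n\in\Z$ and $g\in G$. Hence $h(g^n,g^m)=0$ for all such $m,n,g$ if and only if $c(e,g^n,g^{n+m})=0$ for all such $m,n,g$; substituting $m'=n+m$, which for each fixed $n$ runs over all of $\Z$ as $m$ does, this is precisely the condition $c(e,g^n,g^{m'})=0$ appearing in (i). This reindexing is the only genuine computation, and the one place demanding mild care.

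For (ii) the key point is that $\iota_2$ is natural with respect to restriction to a subgroup: for $Z<G$ and $g_1,g_2\in Z$ the formula $c(e,g_1,g_1g_2)$ only evaluates $c$ on $Z$, so $(\iota_2 c)|_Z=\iota_2(c|_Z)$, i.e.\ $h|_Z=\iota_2(c|_Z)$. Since $\iota_2$ restricts to an isomorphism of cochain complexes for $Z$, we get $h|_Z\equiv 0$ if and only if $c|_Z\equiv 0$, matching condition (ii) of Lemma \ref{LemmaHomogeneous}.

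For (iii) I would use that $\iota_\bullet$ induces an isomorphism on (bounded) cohomology which intertwines the comparison maps, so it carries $[c|_H]$ to $[h|_H]$ and preserves exactness; thus $[c|_H]\in EH^2_b(H;\R)$ if and only if $[h|_H]\in EH^2_b(H;\R)$. On cochains one has $\iota_2^{-1}(\delta^1 f)=c_f$ by the very definition of $c_f$ in \eqref{cf}, so $h|_H=\delta^1 f$ with $f\in\mathcal{H}(H)$ is equivalent to $c|_H=c_f$ with $f\in\mathcal{H}(H)$. Assembling the three equivalences with the defining one completes the deduction. Since all the substantive work is already contained in Lemma \ref{LemmaHomogeneous}, no new obstacle arises here beyond the bookkeeping of restriction and the reindexing in (i).
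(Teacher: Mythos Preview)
Your argument is correct and follows exactly the approach the paper intends: the Corollary is stated with a bare \qed\ as an immediate translation of Lemma~\ref{LemmaHomogeneous} under the chain isomorphism $\iota_2$, and you have simply spelled out that translation carefully. The reindexing in (i), the naturality of $\iota_2$ under restriction in (ii), and the identity $\iota_2^{-1}(\delta^1 f)=c_f$ in (iii) are precisely the bookkeeping steps the paper leaves to the reader.
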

The proof of the following lemma was sketched in \cite{Bouarich95}; for a detailed proof see \cite[Prop. 2.16]{Frigerio}.
\begin{lemma}[Bouarich]\label{LemmaBouarich} Let $G$ be a group. Then every class $\alpha \in H^2_b(G)$ admits a unique restriction-homogeneous representative.\qed
\end{lemma}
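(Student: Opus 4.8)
The plan is to prove existence and uniqueness separately, working throughout in the inhomogeneous model, where restriction-homogeneity of $h \in \overline{Z}^2_b(G)$ means $h(g^n, g^m) = 0$ for all $g \in G$ and $n, m \in \Z$. Since two bounded cocycles represent the same class in $H^2_b(G)$ precisely when they differ by $\delta^1\phi$ for some $\phi \in \ell^\infty(G) = \overline{C}^1_b(G)$, the existence task is to produce, from an arbitrary bounded representative $h$ of $\alpha$, a single bounded function $\phi$ for which $h + \delta^1\phi$ vanishes on all cyclic subgroups; this cocycle is automatically bounded and cohomologous to $h$, hence still represents $\alpha$.

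For existence I would homogenize $h$ one cyclic subgroup at a time and then glue. Fix $g \in G$ and pull $h$ back along the homomorphism $\Z \to G$, $a \mapsto g^a$, obtaining a bounded cocycle $F_g(a,b) := h(g^a, g^b) \in \overline{Z}^2_b(\Z)$. Because $\Z$ is amenable we have $H^2_b(\Z; \R) = 0$, so $F_g$ admits a bounded primitive, and normalizing by $\rho_g(0) = 0$ makes it unique; telescoping the cocycle relation at $a=1$ yields the explicit Ces\`aro formula $\rho_g(1) = \lim_{n\to\infty} \frac{1}{n} \sum_{b=0}^{n-1} h(g, g^b)$, whence $|\rho_g(1)| \le \|h\|_\infty$. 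I then set $\phi(g) := -\rho_g(1)$, so that $\phi \in \ell^\infty(G)$ with $\|\phi\|_\infty \le \|h\|_\infty$. The crucial point is a naturality lemma: for every $j \in \Z$ the function $k \mapsto \rho_g(jk)$ is a bounded, normalized primitive of $F_{g^j}$, so by uniqueness $\rho_{g^j}(k) = \rho_g(jk)$; in particular $\phi(g^j) = -\rho_{g^j}(1) = -\rho_g(j)$. Substituting this into $\delta^1\phi(g^n, g^m) = \phi(g^m) - \phi(g^{n+m}) + \phi(g^n) = -\bigl(\rho_g(m) - \rho_g(n+m) + \rho_g(n)\bigr) = -F_g(n,m) = -h(g^n, g^m)$ shows that $h + \delta^1\phi$ is restriction-homogeneous. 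Torsion causes no trouble, since $F_g$ is a bounded cocycle on $\Z$ whether or not $a \mapsto g^a$ is injective.

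For uniqueness, suppose $h_1, h_2$ are both restriction-homogeneous representatives of $\alpha$, so that $h_2 - h_1 = \delta^1\psi$ for some $\psi \in \ell^\infty(G)$. Evaluating on powers gives $0 = (h_2 - h_1)(g^n, g^m) = \psi(g^m) - \psi(g^{n+m}) + \psi(g^n)$, which (after $n=m=0$ shows $\psi(e)=0$) forces $n \mapsto \psi(g^n)$ to be additive, hence $\psi(g^n) = n\,\psi(g)$ for all $n \in \Z$. Boundedness of $\psi$ then forces $\psi(g) = 0$ for every $g$, so $\psi \equiv 0$ and $h_1 = h_2$.

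The main obstacle is the existence half, and within it the naturality lemma $\rho_{g^j} = \rho_g(j\,\cdot\,)$: this is exactly what guarantees that the pointwise-defined $\phi$ simultaneously homogenizes $h$ on \emph{all} cyclic subgroups at once, rather than on each in isolation, and it is where uniqueness of bounded primitives on $\Z$ (i.e.\ amenability) is used in an essential way. The remaining ingredients---existence of a bounded primitive on each cyclic subgroup and the uniform bound $\|\phi\|_\infty \le \|h\|_\infty$---are then routine consequences of amenability of cyclic groups.
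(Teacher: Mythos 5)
Your argument is correct and follows essentially the same route as the proof the paper cites (Frigerio, Prop.~2.16): homogenize over each cyclic subgroup using amenability of $\Z$, and glue via uniqueness of bounded primitives, which is exactly your naturality lemma $\rho_{g^j}(k)=\rho_g(jk)$. One cosmetic point: the normalization $\rho_g(0)=0$ is neither available nor needed --- the bounded primitive of $F_g$ is automatically unique because the difference of two of them is a bounded homomorphism $\Z\to\R$, and in fact $\rho_g(0)$ is forced to equal $h(e,e)$, which need not vanish for the initial representative $h$.
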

\begin{cor}\label{CorAlternating} Every restriction-homogeneous $2$-cocycle $c\in Z^2(G; \R)$ is alternating.
\end{cor}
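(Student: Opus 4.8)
The plan is to reduce the statement to two antisymmetry relations and then to feed the cocycle identity \eqref{Coboundary} with cleverly chosen degenerate arguments. Recall that $c$ is alternating precisely when $c(g_{\sigma(0)}, g_{\sigma(1)}, g_{\sigma(2)}) = \operatorname{sgn}(\sigma)\, c(g_0, g_1, g_2)$ for every $\sigma \in S_3$. Since $S_3$ is generated by its two adjacent transpositions, it suffices to prove the two relations $c(g_0, g_1, g_2) = -c(g_1, g_0, g_2)$ and $c(g_0, g_1, g_2) = -c(g_0, g_2, g_1)$.

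First I would establish the degeneracy vanishing: $c(g_0, g_1, g_2) = 0$ whenever two of the three arguments coincide. This is the only step that genuinely uses the hypothesis, and I expect it to be the conceptual heart of the argument even though it is short, since it is precisely where restriction-homogeneity is converted into a pointwise vanishing statement. Using $G$-invariance to normalize the first argument to $e$, every degenerate triple takes one of the forms $c(e, h, h)$, $c(e, e, h)$ or $c(e, h, e)$ for some $h \in G$; writing $e = h^0$ and $h = h^1$, all three are instances of $c(e, h^n, h^m)$ with $n, m \in \{0,1\}$, and hence vanish by the restriction-homogeneity criterion of Corollary \ref{CorHomogeneous}(i).

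With degeneracy vanishing in hand, both antisymmetries drop out of \eqref{Coboundary} by specializing the fourth variable, and no further appeal to the hypothesis is needed. Setting $g_3 = g_1$ makes the first and third faces degenerate (they become $c(g_1, g_2, g_1)$ and $c(g_0, g_1, g_1)$), so the identity collapses to $-c(g_0, g_2, g_1) - c(g_0, g_1, g_2) = 0$, which is antisymmetry in the last two slots. Symmetrically, setting $g_2 = g_0$ makes the second and fourth faces degenerate ($c(g_0, g_0, g_3)$ and $c(g_0, g_1, g_0)$), leaving $c(g_1, g_0, g_3) + c(g_0, g_1, g_3) = 0$, i.e.\ antisymmetry in the first two slots after renaming $g_3$. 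Combining the two relations yields invariance-up-to-sign under all of $S_3$, which is the assertion. The only thing one must be careful about is the bookkeeping of which face degenerates under which substitution; once the degeneracy vanishing is isolated as the single nontrivial input, the rest is forced purely by the cocycle identity.
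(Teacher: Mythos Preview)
Your proof is correct and takes a genuinely different, more elementary route than the paper's.

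The paper argues by uniqueness: it forms the alternation $\widetilde{c}$, observes that $\widetilde{c}$ is again restriction-homogeneous (immediate from Corollary~\ref{CorHomogeneous}(ii)) and cohomologous to $c$, and then invokes Bouarich's uniqueness result (Lemma~\ref{LemmaBouarich}) to conclude $c=\widetilde{c}$. Your argument, by contrast, never leaves the cocycle level: you extract the degeneracy vanishing $c(e,h^n,h^m)=0$ directly from the defining property and then read off both transposition antisymmetries from the cocycle identity~\eqref{Coboundary} with a repeated variable. The bookkeeping in your two specializations ($g_3=g_1$ and $g_2=g_0$) is correct.

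What each approach buys: the paper's proof is a one-liner once Lemma~\ref{LemmaBouarich} is in place, and it illustrates nicely how powerful that uniqueness statement is. Your proof, on the other hand, is self-contained and avoids appealing to Bouarich's lemma altogether; in particular it makes no use of boundedness, so it proves the corollary exactly as stated for arbitrary $c\in Z^2(G;\R)$, whereas the paper's argument, strictly speaking, only covers the bounded case since Lemma~\ref{LemmaBouarich} is formulated for $H^2_b$.
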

\begin{proof} It is immediate from Property (ii) of Corollary \ref{CorHomogeneous} that if $c\in Z^2(G; \R)$ is restriction-homogeneous, then also
\[\widetilde{c}(g_0, g_1, g_2) := \sum_{\sigma \in {S_3}} (-1)^\sigma c(g_{\sigma_0}, g_{\sigma_1}, g_{\sigma_2})\]
is restriction-homogeneous. Moreover, $\widetilde{c}$ is alternating by construction and cohomologous to $c$. By uniqueness we have $c = \widetilde{c}$, i.e. $c$ is alternating.
\end{proof}

\subsection{Properties of restriction-homogeneous cocycles}
In general, checking non-triviality of restrictions of cohomology classes is difficult due to the presence of coboundaries. In degree $2$ checking non-triviality of such a restriction can in many cases be reduced to a pointwise condition, namely non-triviality of the restriction of a restriction-homogeneous representative of the given class. This observation, which has no counterpart in higher degree bounded cohomology, lies at the heart of our approach and can be formulated as follows.
\begin{lemma}\label{HomogeneousNonvanishing}
Let $G$ be a group, $H<G$ a subgroup and $c \in Z_b^2(G; \R)$ a bounded restriction-homogeneous $2$-cocycle. If $[c|_H]\in EH^2_b(H)$ and if there exist $h_0,h_1,h_2\in H$ with $c(h_0,h_1,h_2)\neq 0$, then $[c|_H] \in EH^2_b(H)$ is non-zero.
\end{lemma}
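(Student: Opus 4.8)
The plan is to prove the contrapositive in its sharpest form: assuming that $[c|_H]=0$ in $EH^2_b(H)$, I will show that $c$ vanishes identically on every triple of elements of $H$, contradicting the existence of $h_0,h_1,h_2$ with $c(h_0,h_1,h_2)\ne 0$. Since $c$ is restriction-homogeneous and $[c|_H]\in EH^2_b(H)$ by hypothesis, property (iii) of Corollary \ref{CorHomogeneous} immediately supplies a \emph{homogeneous} quasimorphism $f\in\mathcal{H}(H)$ with $c|_H=c_f$, where $c_f$ is the explicit cocycle of \eqref{cf}. This is the step that replaces the global (cohomological) information by a single quasimorphism, and it is where restriction-homogeneity is essential.

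Next I translate the cohomological vanishing $[c|_H]=0$ into an algebraic statement about $f$. Because $c_f=\iota_2^{-1}(\delta^1 f)$ and $\iota_2$ induces the identification between the two models of $EH^2_b(H)$, the class $[c|_H]=[c_f]$ is precisely $i(f)$ under the isomorphism $i\colon Q(H)/(\mathrm{Hom}(H;\R)\oplus\ell^\infty(H))\to EH^2_b(H)$. Injectivity of $i$ then forces $f\in\mathrm{Hom}(H;\R)\oplus\ell^\infty(H)$, say $f=\phi+b$ with $\phi$ a homomorphism and $b$ bounded. Here $f-\phi=b$ is bounded and, being a difference of homogeneous quasimorphisms, also homogeneous; but a bounded homogeneous quasimorphism is zero since $|f(g)|=|f(g^n)|/n\le\|b\|_\infty/n\to 0$. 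Hence $f=\phi$ is a genuine homomorphism.

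It remains to compute $c_f$ for a homomorphism. Plugging $f=\phi$ into \eqref{cf}, the three terms telescope:
\[
c_\phi(g_0,g_1,g_2)=\phi(g_0^{-1}g_1)+\phi(g_1^{-1}g_2)+\phi(g_2^{-1}g_0)=\phi\big(g_0^{-1}g_1\,g_1^{-1}g_2\,g_2^{-1}g_0\big)=\phi(e)=0,
\]
so $c|_H=c_\phi\equiv0$ on $H^3$, giving the desired contradiction.

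I do not anticipate a serious obstacle: Corollary \ref{CorHomogeneous} and the isomorphism $i$ carry all the weight, and the only quantitative input is the standard vanishing of bounded homogeneous quasimorphisms. The one point deserving care is the bookkeeping in the second step --- verifying that $[c|_H]$ really corresponds to the class of $f$ (and not to $f$ up to sign or a spurious coboundary) under $i$ --- which holds because $c_f$ was defined as $\iota_2^{-1}(\delta^1 f)$, exactly the preimage under the chain isomorphism used to set up $i$.
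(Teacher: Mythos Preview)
Your proof is correct and follows exactly the same approach as the paper: use restriction-homogeneity via Corollary~\ref{CorHomogeneous}(iii) to write $c|_H=c_f$ for some $f\in\mathcal H(H)$, deduce that $f$ is a homomorphism from $[c_f]=0$, and conclude $c_f\equiv 0$. You have simply spelled out in detail the two steps the paper leaves implicit (why $f$ must be a homomorphism, and why $c_\phi\equiv 0$ for a homomorphism $\phi$).
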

\begin{proof}
 Since $c$ is restriction-homogeneous we have $c|_H=c_f$ for some homogeneous quasimorphism $f$. If $[c|_H] = [c_f]$ was trivial, then $f$ was a homomorphism, hence $c_f \equiv 0$, contradicting our assumption.
\end{proof}
Note that if $H < G$ is a virtually free subgroup, then $H^2_b(H; \R) = EH^2_b(H; \R)$. We can thus record the following special case for later reference:
\begin{cor}\label{HomogeneousNonvanishing2} Let $G$ be a group, $H<G$ a virtually free subgroup and $c \in Z_b^2(G; \R)$ a bounded restriction-homogeneous $2$-cocycle defining a class $[c] \in H^2_b(G; \R)$. Then 
\[[c|_H] = 0 \quad \Leftrightarrow \quad c|_H = 0. \quad \qed\]
\end{cor}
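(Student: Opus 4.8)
The plan is to read this off directly from Lemma~\ref{HomogeneousNonvanishing}, using the fact recorded in the sentence immediately preceding the statement that $H^2_b(H;\R) = EH^2_b(H;\R)$ whenever $H$ is virtually free. The backward implication is trivial: if the cocycle $c|_H$ vanishes identically, then so does its class $[c|_H]$. All of the content therefore sits in the forward implication, and my strategy is to establish its contrapositive, namely that if $c|_H \neq 0$ then $[c|_H] \neq 0$.

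First I would observe that, since $H$ is virtually free, the hypothesis ``$[c|_H] \in EH^2_b(H)$'' that appears in Lemma~\ref{HomogeneousNonvanishing} is automatically satisfied: every class in $H^2_b(H;\R)$ already lies in $EH^2_b(H;\R)$, so in particular $[c|_H]$ does. This disposes of the only genuine hypothesis of the lemma beyond the restriction-homogeneity of $c$, which we are given. Next I would suppose $c|_H \neq 0$, so that there exist $h_0, h_1, h_2 \in H$ with $c(h_0, h_1, h_2) \neq 0$. Since $c$ is restriction-homogeneous and $[c|_H] \in EH^2_b(H)$, Lemma~\ref{HomogeneousNonvanishing} applies verbatim and yields $[c|_H] \neq 0$. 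Reading this contrapositively delivers the forward implication: $[c|_H] = 0$ forces $c|_H = 0$.

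The only point that requires any justification beyond invoking Lemma~\ref{HomogeneousNonvanishing} is the identity $H^2_b(H;\R) = EH^2_b(H;\R)$ for virtually free $H$, and I do not expect this to be an obstacle. It follows from the vanishing $H^2(H;\R) = 0$ --- a consequence of $H$ having virtual cohomological dimension at most one, combined with a transfer argument over the field $\R$ --- so that the comparison map $H^2_b(H;\R) \to H^2(H;\R)$ has trivial codomain and its kernel $EH^2_b(H;\R)$ exhausts all of $H^2_b(H;\R)$. With this fact in hand the corollary is a one-line specialization of the lemma, and there is no substantive difficulty to overcome; the work has already been done in establishing Lemma~\ref{HomogeneousNonvanishing}, whose own proof turns on the observation that a homogeneous quasimorphism whose coboundary is a coboundary must be a homomorphism.
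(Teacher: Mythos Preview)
Your proposal is correct and matches the paper's approach exactly: the paper records this corollary with a \qed\ immediately after noting that $H^2_b(H;\R) = EH^2_b(H;\R)$ for virtually free $H$, treating it as an immediate special case of Lemma~\ref{HomogeneousNonvanishing}. Your write-up simply makes explicit the one-line contrapositive argument that the paper leaves implicit.
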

For later use, we also record the following property of restriction-homogeneous cocycles.
\begin{lemma}\label{HomogeneousSum}
Let $G$ be a group and $c \in Z_b^2(G; \R)$ a bounded restriction-homogeneous $2$-cocycle. Then for all $g, h \in G$ and $N \in \mathbb N$ we have
\[c\left(g^{-N-1}, h, e\right) = \sum_{i=0}^N c\left(g^{-1}, g^ih, e\right).\]
\end{lemma}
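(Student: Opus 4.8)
The plan is to reduce the identity to a telescoping sum that exploits the cocycle identity \eqref{Coboundary} together with the vanishing of $c$ on cyclic subgroups (Property (ii) of Corollary \ref{CorHomogeneous}). First I would recall that since $c$ is restriction-homogeneous, $c(e, g^n, g^m) = 0$ for all $n, m \in \Z$; by $G$-invariance this means $c$ vanishes on any triple whose three entries lie in a common coset of a cyclic subgroup. The strategy is to write the left-hand side $c(g^{-N-1}, h, e)$ as a telescoping sum of the intermediate terms $c(g^{-i}, h, e)$ for $i = 0, \dots, N+1$, and then show that each successive difference equals one summand $c(g^{-1}, g^i h, e)$ on the right.

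The key computational step is the following. For each $i$, I would apply the cocycle identity to a carefully chosen $4$-tuple so that $d^2 c = 0$ relates $c(g^{-i-1}, h, e)$ and $c(g^{-i}, h, e)$. A natural choice is the tuple $(g^{-i-1}, g^{-i}, h, e)$, for which \eqref{Coboundary} gives
\[
c(g^{-i}, h, e) - c(g^{-i-1}, h, e) + c(g^{-i-1}, g^{-i}, e) - c(g^{-i-1}, g^{-i}, h) = 0.
\]
The term $c(g^{-i-1}, g^{-i}, e)$ vanishes because all three entries lie in the cyclic group $\langle g \rangle$. For the remaining term, $G$-invariance lets me multiply all entries on the left by $g^{i+1}$, giving $c(g^{-i-1}, g^{-i}, h) = c(e, g, g^{i+1} h)$, which by another application of invariance (multiplying by $g^{-1}$, say) I would rewrite in the form $c(g^{-1}, g^i h, e)$ up to reindexing. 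Summing the resulting relation $c(g^{-i-1}, h, e) = c(g^{-i}, h, e) - c(g^{-1}, g^i h, e)$ over $i = 0, \dots, N$ telescopes the first terms and collapses to the claimed identity, using $c(e, h, e) = 0$ as the base case (which holds since $c$ is alternating by Corollary \ref{CorAlternating}).

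I expect the main obstacle to be bookkeeping: getting the index shifts and the $G$-invariance translations exactly right so that the vanishing terms really do lie in a common cyclic coset and the surviving terms match $c(g^{-1}, g^i h, e)$ precisely rather than some conjugate or permutation of it. In particular I would need to double-check the orientation conventions, since the alternating property introduced in Corollary \ref{CorAlternating} means sign errors could masquerade as correct-looking terms. Once the single-step relation is verified, the telescoping itself is routine. An alternative, cleaner route would be to pass to the inhomogeneous picture via $h = \iota_2(c)$ and use the restriction-homogeneous condition $h(g^n, g^m) = 0$ directly; there the analogous identity becomes a statement about $h(g^{-1}, \cdot)$ that may telescope more transparently, and I would likely cross-check the homogeneous computation against this inhomogeneous one to catch any sign mistakes.
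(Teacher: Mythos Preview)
Your approach is essentially the paper's own proof: the paper phrases it as induction on $N$, applying the cocycle identity to $(g^{-N-1},g^{-N-2},h,e)$, killing $c(g^{-N-1},g^{-N-2},e)$ by restriction-homogeneity, and rewriting $c(g^{-N-1},g^{-N-2},h)$ via $G$-invariance and the alternating property---exactly your single-step relation, with the telescoping sum and the induction being the same argument in different clothing.

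One bookkeeping correction (which you rightly anticipated): after translating $c(g^{-i-1},g^{-i},h)=c(g^{-1},e,g^ih)$, swapping the last two entries gives $c(g^{-1},e,g^ih)=-c(g^{-1},g^ih,e)$, so the single-step relation reads
\[
c(g^{-i-1},h,e)=c(g^{-i},h,e)+c(g^{-1},g^ih,e),
\]
with a plus sign rather than the minus you wrote; this is what makes the telescoped sum come out with the correct sign.
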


\begin{proof} We argue by induction on $N$, the case $N=0$ being trivial. Assume that the statement holds for some $N$, and apply the cocycle identity \eqref{Coboundary} to the quadruple $(g^{-N-1},g^{-N-2}, h, e)$ to obtain
 \begin{equation}\label{EqHomSum}
  c(g^{-N-2}, h, e) = c(g^{-N-1}, h, e) -c(g^{-N-1},g^{-N-2},e)+c(g^{-N-1},g^{-N-2}, h).
 \end{equation}
 Since $c$ is restriction-homogeneous we have $c(g^{-N-1},g^{-N-2},e) = 0$. On the other hand, since $c$ is $G$-invariant and moreover alternating by Corollary \ref{CorAlternating}, we obtain
 \[
 c(g^{-N-1},g^{-N-2}, h) = c(e, g^{-1}, g^{N+1}h) = c(g^{-1},g^{N+1} h, e).
 \]
 Plugging both into \eqref{EqHomSum} and using the induction hypothesis we obtain
 \begin{align*}
  c(g^{-N-2}, h, e) &=  c(g^{-N-1}, h, e)+ c(g^{-1},g^{N+1} h, e) =  \sum_{i=0}^N c\left(g^{-1}, g^ih, e\right) +  c(g^{-1},g^{N+1} h, e)\\ 
  &= \sum_{i=0}^{N+1} c\left(g^{-1}, g^ih, e\right).
 \end{align*}
 This finishes the proof. 
\end{proof}

\section{Twisted triangles happen with positive probability}\label{sec:twisted}

\subsection{Setting and formulation of the main result}
Let $G$ be a finitely generated group and $S$ a finite symmetric generating set for $G$ containing the identity. We consider the measure $\mu = \mu_S := \frac{1}{|S|}\sum_{s\in S} \delta_s$. Let $(g_n)$ be a sequence of independent random variables on $G$ with distribution $\mu$ and let $x_n:=g_1 \cdots g_n$ be the product of $n$ randomly and independently chosen elements of $S$. We refer to the sequence $(x_n)$ of random variables as the \emph{lazy simple random walk} on $G$ associated to $S$. Here, laziness refers to the fact that at each step the random walk stays still with positive probability since $S$ contains the identity. The goal of this section is to prove the following theorem:
\begin{thm}
\label{posprobtwist}
Let $G$ be a finitely generated group, $c \in Z_b^2(G)$ restriction-homogeneous and assume that $[c] \neq 0 \in H^2_b(G; \R)$. Let $(x_n),(y_m)$ be independent lazy simple random walks on $G$. Then there exist $\epsilon, m_0, n_0>0$ so that for each $n\geq n_0$, $m\geq m_0$ we have
\[\matP(|c(x_n,y_m,e)|\geq \epsilon)\geq \epsilon.\]
\end{thm}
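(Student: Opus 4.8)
The goal is to show that a nontrivial restriction-homogeneous bounded $2$-cocycle $c$ takes values bounded away from $0$ on triples $(x_n, y_m, e)$ with uniformly positive probability, for $n, m$ large. The plan is to argue by contradiction against the nonvanishing of $[c]$. First I would record the consequence of Lemma \ref{HomogeneousSum}: since $c$ is restriction-homogeneous (hence $G$-invariant and alternating by Corollary \ref{CorAlternating}), the ``telescoping'' identity $c(g^{-N-1}, h, e) = \sum_{i=0}^N c(g^{-1}, g^i h, e)$ converts a single large value of $c$ on a power of $g$ into a sum of $N+1$ values of $c$ along the orbit $\{g^i h\}$. The contrapositive direction is what drives the argument: if $|c(g^{-1}, g^i h, e)|$ were uniformly small for all $i$, then $|c(g^{-N-1}, h, e)|$ would be at most $(N+1)$ times that small bound. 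But $c$ is \emph{bounded}, say $\|c\|_\infty \le D$, so this only becomes useful combined with a mechanism forcing cancellation — so the real content must be that the sum does \emph{not} telescope to something bounded unless the class is trivial.

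The mechanism I would use to connect the pointwise values of $c$ to triviality of $[c]$ is the defect of the associated quasimorphism. Assume for contradiction that $\matP(|c(x_n, y_m, e)| \ge \e) < \e$ fails to hold for every $\e$; equivalently, for suitable small $\e$ the values $c(x_n, y_m, e)$ are, with probability close to $1$, concentrated near $0$ as $n, m \to \infty$. I would then fix a ``good'' group element $g$ arising as a typical value $x_n$ of the first walk with $c(g^{-1}, \cdot, e)$ small, and feed it into the telescoping identity with $h$ ranging over typical values $y_m$ of the second walk. The key step is to show that if $c(g^{-1}, g^i h, e)$ is small for most $i$ in a long range, then by Lemma \ref{HomogeneousSum} the quantity $c(g^{-N-1}, h, e)$ grows sublinearly in $N$ along a positive-density set, whereas the structure of $c$ as a coboundary-obstruction forces a definite linear growth whenever $[c] \ne 0$. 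Concretely, $c(g^{-N-1}, h, e)$ is essentially $f(g^{N+1}) + (\text{bounded terms})$ for the homogeneous quasimorphism $f$ representing $c$ on the relevant subgroup, and homogeneity gives $f(g^{N+1}) = (N+1) f(g)$ with $f(g) \ne 0$ for a suitable $g$ — this is where nontriviality of $[c]$ enters.

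The random-walk input enters to guarantee that such a ``good'' pair $(g, h)$ of elements, with $f(g) \ne 0$ and with the orbit $\{g^i h\}$ genuinely sampling the walk, occurs with positive probability. Since $[c] \ne 0$ there is some $g_0$ with $f(g_0) \ne 0$; I would argue that along the lazy simple random walk the element $x_n$ lands, with positive probability, in a region where the homogeneous quasimorphism is nonzero and where $y_m$ is positioned so that the triple $(x_n, y_m, e)$ realizes a definite value of $c$. The laziness of the walk is what lets us adjust lengths freely (staying still with positive probability), so that $n, m$ can be taken large along any prescribed subsequence. I would combine the telescoping lower bound with a pigeonhole argument: among the $N+1$ summands $c(g^{-1}, g^i h, e)$, at least one must have absolute value $\ge |c(g^{-N-1}, h, e)|/(N+1) \approx |f(g)|$, a quantity bounded away from $0$ independently of $N$, so a definite fraction of the summands are $\ge \e$ in absolute value. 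Translating this fraction back into a probability for the independent walks $(x_n), (y_m)$ gives the desired uniform lower bound $\e$.

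\textbf{Main obstacle.} The hard part will be the translation between the deterministic pigeonhole statement (a definite proportion of the orbit points $g^i h$ give large values of $c$) and the probabilistic statement about \emph{independent} walks $(x_n)$ and $(y_m)$: one must show that sampling $x_n = g$ as a walk increment and $y_m = h$ independently reproduces the orbit-sampling $\{g^i h\}$ with enough overlap in distribution, uniformly in $n$ and $m$. This requires controlling how the homogeneous quasimorphism $f$ is distributed along the walk — in particular, ruling out that $f(x_n)$ concentrates at $0$ — and is precisely the point where genuinely probabilistic (rather than combinatorial) arguments, and the boundedness of $c$, must be invoked.
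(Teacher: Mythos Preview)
Your central mechanism is broken: you claim that $c(g^{-N-1},h,e)$ ``is essentially $f(g^{N+1})+(\text{bounded terms})$'' and hence grows linearly in $N$, but $c$ is a \emph{bounded} cocycle, so $|c(g^{-N-1},h,e)|\le\|c\|_\infty$ for all $N$. (If you compute in the exact case $c=c_f$ you get $c_f(g^{-N-1},h,e)=f(g^{N+1}h)-f(h)-(N{+}1)f(g)$, and the quasimorphism inequality $|f(g^{N+1}h)-(N{+}1)f(g)-f(h)|\le D(f)$ shows the expression is $O(D(f))$, not linear.) Consequently your pigeonhole step only yields a summand of size $\ge \|c\|_\infty/(N{+}1)\to 0$, which gives no uniform $\epsilon$. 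So the argument as written cannot produce the conclusion, and the reliance on a global quasimorphism $f$ also restricts you to the $EH^2_b$ case rather than $H^2_b$.

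The paper uses Lemma~\ref{HomogeneousSum} in the \emph{opposite} direction, and the missing idea is a transfer lemma. From a fixed twisted triple $(g_0,h_0,e)$ one first shows (via the cocycle identity on a tetrahedron) that $\matP\big(c(h_0^{-1},x_n,e)\ge\epsilon\big)\ge 1/6$ in one of finitely many cases. The key new ingredient is the estimate
\[
|c(h_0^{-1},h_0^iy_m,e)-c(h_0^{-1},x_n,e)|\le |c(x_n,h_0^iy_m,e)|+|c(h_0x_n,h_0^{i+1}y_m,e)|,
\]
another tetrahedron identity. If, for contradiction, all the probabilities $\matP(|c(x_n,h_0^iy_m,e)|\ge\epsilon/4)$ and $\matP(|c(h_0x_n,h_0^{i+1}y_m,e)|\ge\epsilon/4)$ were tiny for $0\le i\le N$, then with positive probability \emph{every} $c(h_0^{-1},h_0^iy_m,e)$ would be $\ge\epsilon/2$ with the \emph{same sign}; summing via Lemma~\ref{HomogeneousSum} gives $|c(h_0^{-N-1},y_m,e)|\ge (N{+}1)\epsilon/2>\|c\|_\infty$ for $N=\lceil 2\|c\|_\infty/\epsilon\rceil$, a contradiction. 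Hence some $i$ gives $\matP(|c(x_n,h_0^iy_m,e)|\ge\epsilon/4)\ge \frac{1}{20(N+1)}$ (or the shifted version), and laziness then absorbs the finite prefixes $h_0^i$, $h_0$ into the walks. The point you are missing is precisely this transfer: one does not try to extract a large summand from a bounded sum, but rather shows that smallness of $c$ on the random triples forces all the telescoping summands for a \emph{fixed} $h_0$ to be large of one sign, which is what contradicts boundedness.
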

For the rest of this section we fix a group $G$ and a restriction-homogeneous $c \in Z_b^2(G)$ with $[c] \neq 0$. We then call a triple $(g,h,k) \in G^3$ \emph{twisted} provided $|c(g,h,k)| > 0$ and more precisely \emph{$\epsilon$-twisted} for some $\epsilon>0$ provided $|c(g,h,k)|\geq \epsilon$. Using this language, the theorem states that twisted triangles happen with positive probability. 

The reader only interested in the $EH^2_b$-version of the main theorem may moreover assume that $c = c_f$, where $f$ is a homogeneous quasimorphism on $G$ and $c_f$ is defined by \eqref{cf}.

\subsection{Propagation of twisted triangles}
Our starting point is the following immediate consequence of the cocycle identity \eqref{Coboundary}, which says that if one face of a tetrahedron is twisted, then at least one other face of that tetrahedron is also twisted (possibly with smaller constant).
\begin{lemma}\label{tripletwist}
 Assume $(g_0, g_1, g_2)$ is $3 \epsilon$-twisted. Then for every $h \in G$ at least one of $(g_1, g_2, h)$, $(g_0, g_2, h)$, $(g_0, g_1, h)$ is $\epsilon$-twisted.\qed
\end{lemma}
Since $[c]\neq 0$ and hence $c\neq 0$ we know that there exists at least \emph{some} twisted triangle; since $c$ is $G$-invariant we can find a twisted triangle whose last coordinate is the identity. Let us fix such a twisted triangle $(g_0, h_0, e)$ and set $\epsilon := \frac 1 3 |c(g_0,h_0,e)|$.
\begin{cor}\label{RWAlternative}
\hspace{2mm}
\begin{enumerate}[(i)]
\item If $x \in G$ is an arbitrary element, then at least one of the triangles $(h_0^{-1}, x, e)$, $(g_0, h_0x, e)$ or $(h_0^{-1}g_0, x, e)$ is $\epsilon$-twisted.
\item If $x_n$ is any sequence of $G$-valued random variables, then for any $n$ one of the the following holds:
\begin{enumerate}[(a)]
 \item $\matP(|c(h_0^{-1}, x_n, e)|\geq\epsilon)\geq 1/3$, or
 \item $\matP(|c(g_0, h_0x_n, e)|\geq\epsilon)\geq 1/3$, or
 \item $\matP(|c(h_0^{-1}g_0, x_n, e)| \geq\epsilon)\geq 1/3$.
\end{enumerate}
\end{enumerate}
\end{cor}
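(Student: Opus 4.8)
The plan is to deduce both statements directly from Lemma \ref{tripletwist}, applied to the fixed $3\epsilon$-twisted triangle $(g_0, h_0, e)$, combined with the fact that $c$ is $G$-invariant and alternating (the latter by Corollary \ref{CorAlternating}).

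First, for part (i), I would apply Lemma \ref{tripletwist} to the triple $(g_0, h_0, e)$ with the auxiliary point chosen to be $h := h_0 x$. Since $(g_0, h_0, e)$ is $3\epsilon$-twisted by our choice of $\epsilon$, the lemma guarantees that at least one of the triangles $(h_0, e, h_0 x)$, $(g_0, e, h_0 x)$, $(g_0, h_0, h_0 x)$ is $\epsilon$-twisted. The remaining task is to identify each of these as one of the three triangles in the statement, up to operations preserving $|c|$. Concretely, I would verify the three identities
\begin{align*}
|c(h_0, e, h_0 x)| &= |c(h_0^{-1}, x, e)|, \\
|c(g_0, e, h_0 x)| &= |c(g_0, h_0 x, e)|, \\
|c(g_0, h_0, h_0 x)| &= |c(h_0^{-1} g_0, x, e)|.
\end{align*}
Each is immediate: where a nontrivial vertex needs to become $e$, I left-translate by $h_0^{-1}$ (using $G$-invariance), and then apply a single permutation of the three arguments, reading off the sign from the alternating property. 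In the first identity the relevant permutation is a $3$-cycle (even, so $c$ is unchanged), while in the second and third it is a transposition (odd, so $c$ changes sign but $|c|$ is unchanged). This gives part (i).

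For part (ii), I would observe that part (i) holds for \emph{every} $x \in G$. Hence, writing $A_n$, $B_n$, $C_n$ for the three events in (a), (b), (c) respectively, every realization of the random variable $x_n$ makes at least one of these events occur, so $\matP(A_n \cup B_n \cup C_n) = 1$. Subadditivity of $\matP$ then yields $\matP(A_n) + \matP(B_n) + \matP(C_n) \geq 1$, and by pigeonhole at least one of the three probabilities is at least $1/3$, which is precisely the claimed alternative.

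I do not anticipate a serious obstacle: the mathematical content is entirely contained in Lemma \ref{tripletwist}, and the only step requiring care is the bookkeeping of translations and signs in the three identities of part (i). The choice $h = h_0 x$ is the natural one that makes the vertices $h_0^{-1}$, $h_0 x$, and $h_0^{-1} g_0$ appear once the third coordinate has been normalized to $e$; verifying the identities is then a routine use of $G$-invariance together with Corollary \ref{CorAlternating}.
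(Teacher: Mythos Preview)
Your proposal is correct and follows essentially the same approach as the paper: apply Lemma~\ref{tripletwist} to $(g_0,h_0,e)$ with $h=h_0x$, then use $G$-invariance and Corollary~\ref{CorAlternating} to rewrite the three resulting triangles as those in the statement; part (ii) is then the obvious pigeonhole consequence, which the paper simply records as ``Clearly (ii) follows from (i).''
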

\begin{proof} Clearly (ii) follows from (i). As for (i), it follows from Lemma \ref{tripletwist} applied to the triangle $(g_0, h_0, e)$ and the element $h_0x$ that at least one of the triangles $(h_0, e, h_0x)$, $(g_0, e, h_0x)$ or $(g_0, h_0, h_0x)$ is $\epsilon$-twisted.  Now $c$ is $G$-invariant and moreover alternating by Corollary \ref{CorAlternating}, and thus $|c(h_0, e, h_0x)|=|c(h_0^{-1}, x, e)|$, $|c(g_0, e, h_0x)|=|c(g_0, h_0x, e)|$, and $|c(g_0, h_0, h_0x)|=|c(h_0^{-1}g_0, x, e)|$, proving (i).
\end{proof}
The lemma motivates the further study of twisted triangles based at the identity. It turns out that another rather direct application of the cocycle identity provides a bound on the amount by which $c$ changes when one passes from such a triangle to an adjacent one.
\begin{lemma}\label{tetrahedron}
 Let $g, h, y \in G$. Then 
\[|c(g, y, e) - c(g, h, e)|\leq |c(h, y, e)|+|c(g^{-1}h, g^{-1}y, e)|.\]
%
\end{lemma}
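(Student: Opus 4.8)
The plan is to derive everything from the cocycle identity \eqref{Coboundary}, applied to a single well-chosen tetrahedron. The inequality to be proven relates the value of $c$ on $(g,y,e)$ to its value on $(g,h,e)$, with an error term controlled by $c$ on $(h,y,e)$ and on the ``shifted'' triangle $(g^{-1}h, g^{-1}y, e)$. Since the cocycle identity equates an alternating sum of $c$ over the four faces of a tetrahedron to zero, the strategy is to pick the four vertices so that the four faces are exactly (up to the $G$-action and the alternating symmetry afforded by Corollary \ref{CorAlternating}) the four triangles appearing in the statement.

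First I would apply the cocycle identity to the quadruple $(g, h, y, e)$. This yields
\[
c(h, y, e) - c(g, y, e) + c(g, h, e) - c(g, h, y) = 0,
\]
which rearranges to
\[
c(g, y, e) - c(g, h, e) = c(h, y, e) - c(g, h, y).
\]
At this point the first two terms on the right already match two of the four triangles in the statement, so the only remaining task is to rewrite the leftover face $c(g,h,y)$ as $\pm c(g^{-1}h, g^{-1}y, e)$. This is where I would invoke $G$-invariance together with the fact that $c$ is alternating: translating $(g,h,y)$ on the left by $g^{-1}$ gives $c(g,h,y) = c(e, g^{-1}h, g^{-1}y)$, and then cyclically permuting the entries (using the alternating property to track the sign, noting a cyclic $3$-permutation is even) moves the identity into the last slot, yielding $c(g^{-1}h, g^{-1}y, e)$ up to sign. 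Taking absolute values and applying the triangle inequality then gives
\[
|c(g, y, e) - c(g, h, e)| \le |c(h, y, e)| + |c(g^{-1}h, g^{-1}y, e)|,
\]
as desired.

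The main obstacle, such as it is, lies entirely in the bookkeeping of step three: correctly tracking the sign produced by the permutation that rewrites $c(g,h,y)$ in the normalized form with $e$ in the final coordinate, since the alternating property is essential here and an incorrect permutation parity would break the identification. Everything else is a one-line invocation of the cocycle identity. Because we only want an inequality in absolute value rather than an exact identity, I do not even need to pin down the sign precisely—only that $|c(g,h,y)| = |c(g^{-1}h, g^{-1}y, e)|$—which makes this step essentially automatic once invariance and the alternating property (Corollary \ref{CorAlternating}) are in hand.
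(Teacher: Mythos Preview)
Your proposal is correct and essentially identical to the paper's proof. The only cosmetic difference is that the paper applies the cocycle identity to the quadruple $(h,g,y,e)$ rather than $(g,h,y,e)$, which amounts to swapping the first two entries and is absorbed by the alternating property; both routes yield the same exact identity $c(g,y,e)-c(g,h,e)=c(h,y,e)-c(g^{-1}h,g^{-1}y,e)$ before taking absolute values.
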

\begin{proof} Applying the cocycle property of $c$ to the quadruple $h, g, y, e$ we get
\begin{equation}\label{Tetrahedron}
c(g, y, e) - c(h, y, e) + c(h, g, e) - c(h, g, y)=0.
\end{equation}
Using that $c$ is alternating (by Corollary \ref{CorAlternating}) and $G$-invariant we can rewrite this as
\begin{eqnarray*}
c(g, y, e) - c(g, h, e)&=& c(g, y, e) +   c(h, g, e)\\ &\overset{\eqref{Tetrahedron}}{=}& c(h, y,e) + c(h, g, y)\\ &=& c(h, y, e)-c(g^{-1}h, g^{-1}y, e).
\end{eqnarray*}
Now the lemma follows from the triangle inequality.
\end{proof}
\subsection{Consequences of laziness} The fact that we are working with a lazy random walk can be exploited to provide the following reduction of Theorem \ref{posprobtwist}.\begin{lemma}\label{h_i}
Assume that there exist elements $h_1, \dots, h_l \in G$ with the following property: For every $n, m \geq 1$ there exist $i, j \in \{1, \dots, l\}$ and $\epsilon > 0$ such that
\[\matP(|c(h_ix_n,h_jy_m,e)|\geq \epsilon)\geq \epsilon,\]
Then Theorem \ref{posprobtwist} holds.
\end{lemma}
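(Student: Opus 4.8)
The plan is to deduce Theorem \ref{posprobtwist} from the hypothesis by a direct \emph{decoupling} argument that exploits laziness to prepend the fixed elements $h_i$ to the random walk at a probability cost bounded below independently of $n$ and $m$. Let $K$ be the maximal $S$-word length of the elements $h_1,\dots,h_l$. Since $S$ is symmetric, contains $e$, and generates $G$, each $h_i$ can be written as a product of \emph{exactly} $K$ elements of $S$, padding a shorter word with copies of $e\in S$. This uniform word length is the only place laziness enters, but it is essential: it lets us consume exactly $K$ steps of each walk to spell out any of the $h_i$, so that the probability cost of doing so is the single constant $|S|^{-2K}$ rather than an $i$-dependent (and a priori decaying) quantity.

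First I would fix $n,m\geq K+1$ and apply the hypothesis to the pair $(n-K,\,m-K)$, obtaining indices $i,j\in\{1,\dots,l\}$ and $\epsilon>0$ with $\matP(|c(h_i x_{n-K},h_j y_{m-K},e)|\geq\epsilon)\geq\epsilon$. Writing $g_1,g_2,\dots$ and $g_1',g_2',\dots$ for the increments of the two independent walks, I would condition on the events $A_i=\{g_1\cdots g_K=h_i\}$ and $B_j=\{g_1'\cdots g_K'=h_j\}$ realized by the chosen length-$K$ spellings, each of probability $|S|^{-K}$. On $A_i$ one has $x_n=h_i\,\tilde x_{n-K}$ with $\tilde x_{n-K}:=g_{K+1}\cdots g_n$ a fresh copy of $x_{n-K}$, and similarly $y_m=h_j\,\tilde y_{m-K}$ on $B_j$. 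Since $A_i$, $B_j$, $\tilde x_{n-K}$ and $\tilde y_{m-K}$ are measurable with respect to four mutually independent blocks of increments, the joint event factorizes and I obtain
\[
\matP\bigl(|c(x_n,y_m,e)|\geq\epsilon\bigr)\ \geq\ \matP(A_i)\,\matP(B_j)\,\matP\bigl(|c(h_i x_{n-K},h_j y_{m-K},e)|\geq\epsilon\bigr)\ \geq\ |S|^{-2K}\,\epsilon.
\]
Setting $\epsilon':=|S|^{-2K}\epsilon\leq\epsilon$ and using monotonicity then gives $\matP(|c(x_n,y_m,e)|\geq\epsilon')\geq\epsilon'$, so Theorem \ref{posprobtwist} holds with this $\epsilon'$ and $n_0=m_0=K+1$.

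The step I expect to require the most care is not any estimate but the decoupling itself: one must verify that ``the first $K$ increments spell $h_i$'' is genuinely independent of the tail $g_{K+1}\cdots g_n$, so that conditioning does not distort the law of the tail and the hypothesis transfers verbatim; this is precisely where the product (Markov) structure of the lazy walk is used. The conceptually important point, which I would stress, is that laziness renders the cost of this conditioning the \emph{fixed} constant $|S|^{-2K}$, so that the output twist constant $\epsilon'=|S|^{-2K}\epsilon$ does not decay as $n,m\to\infty$ --- exactly the uniformity over $n,m$ demanded by the conclusion. (Here $\epsilon$ is understood to be independent of $n,m$, as it is in the later verification of the hypothesis.)
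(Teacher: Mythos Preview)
Your proof is correct and follows essentially the same approach as the paper: use laziness to find a single $K$ such that each $h_i$ can be hit by the first $K$ steps with probability bounded below uniformly in $i$, then condition on these first $K$ increments and use independence of the tails. The paper phrases this slightly more abstractly (choosing $K$ and $\delta>0$ with $\mathbb{P}(x_K=h_i)>\delta$ for all $i$) while you give the explicit bound $|S|^{-K}$, and you are a bit more careful than the paper in tracking both factors $|S|^{-K}$ for the two independent walks and in flagging that $\epsilon$ must be taken uniform in $n,m$ (as indeed it is when the lemma is applied).
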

\begin{proof}
Since the support of $\mu$ generates $G$ as a semigroup, for every $h \in G$ there exists $k_0 \geq 1$ with $h \in {\rm supp}(\mu)^{k_0} = {\rm supp}(\mu^{\ast k_0})$ and thus
$\mathbb P(x_{k_0} = h) =  \mu^{\ast k_0}(h) > 0$. Since $\mu(e) > 0$, this implies that $\matP(x_{k} = h)>0$ for all $k \geq k_0$. In particular, we can find $K \geq 1$ and $\delta > 0$ such that
\[
\matP(x_K=h_i)>\delta \quad \text{for all } i=1, \dots, l.
\]
Then, for each $i,j$ and $n,m\geq K+1$,
\[\matP(|c(x_n,y_m,e)|\geq \epsilon)\geq \delta \cdot \matP(|c(h_ix_{n-K},h_jy_{m-K},e)|\geq \epsilon),\]
 and the conclusion easily follows.
\end{proof}
\subsection{Proof of Theorem \ref{posprobtwist}}
Now we can finish the proof.
\begin{proof}[Proof of Theorem \ref{posprobtwist}]
Set $N=\lceil 2\|c\|_\infty/\epsilon \rceil$ and fix $n \in \mathbb N$. Then one of the possibilities (a)-(c) in Corollary \ref{RWAlternative} holds. All three cases are similar, and we will assume that (a) holds. Moreover, we will assume that
\begin{equation}\label{FirstCase}
\matP(c(h_0^{-1}, x_n, e)\geq\epsilon)\geq 1/6.
\end{equation}
Indeed, if \eqref{FirstCase} fails then $\matP(c(h_0^{-1}, x_n, e)\leq-\epsilon)\geq 1/6$, and this case can be dealt with by a symmetric argument.

\noindent Now fix $m$ as well and define random events $A$ and $B$ by
\[
A := (\forall 0\leq i\leq N: \;c(h_0^{-1}, h_0^iy_m, e)\geq \epsilon/2),
\]
respectively
\[
B := (c(h_0^{-1}, x_n, e)\geq\epsilon \text{\ and\ } \forall 0\leq i\leq N:\;  |c(x_n,h_0^iy_m,e)|+|c(h_0x_n,h_0^{i+1}y_m,e)|\leq \epsilon/2).
\]
Observe that applying Lemma \ref{tetrahedron} with $g=h_0^{-1}, y=h_0^iy_m$, $ h=x_n$ we obtain
\[
|c(h_0^{-1}, h_0^iy_m, e) - c(h_0^{-1}, x_n, e)|\leq |c(x_n, h_0^iy_m, e)|+|c(h_0x_n, h_0^{i+1}y_m, e)|.
\]
Thus the event $B$ implies the event $A$, and hence $\matP(A) \geq \matP(B)$. We now assume for contradiction that  that for all $0\leq i\leq N$ we have
\[
\max\left\{\matP\left(|c(x_n,h_0^iy_m,e)|\geq \epsilon/4\right),\;\matP\left(|c(h_0x_n,h_0^{i+1}y_m,e)|\geq \epsilon/4\right)\right\} \leq \frac{1}{20(N+1)}, 
\]
and hence 
\begin{equation}\label{Estimate1}
\matP\left(|c(x_n,h_0^iy_m,e)|+|c(h_0x_n,h_0^{i+1}y_m,e)|\geq \epsilon/2\right)\geq \frac{1}{10(N+1)}.
\end{equation}
Using \eqref{FirstCase} and \eqref{Estimate1} we then obtain
\[
 \matP\left(\forall 0\leq i\leq N: \;c(h_0^{-1}, h_0^iy_m, e)\geq \epsilon/2\right) = \matP(A) \geq \matP(B) \geq 1/6-(N+1)\frac{1}{10{(N+1)}} > 0.
\]
We could thus find $y$ such that $c(h_0^{-1}, h_0^iy, e)\geq \epsilon/2$ for all $0\leq i \leq N$, but then applying Lemma \ref{HomogeneousSum} we end up with the contradiction
\[
\left|c(h_0^{-N-1}, y, e)\right| = \left|\sum_{i=0}^N c(h_0^{-1}, h_0^iy, e)\right| \geq (N+1)\epsilon/2 > \|c\|_\infty.
\]
This contradiction shows that necessarily there exists $0\leq i\leq N$ such that either
\[\matP(|c(x_n,h_0^iy_m,e)|\geq \epsilon/4)\geq \frac{1}{20(N+1)}\]
or
\[\matP(|c(h_0x_n,h_0^{i+1}y_m,e)|\geq \epsilon/4)\geq \frac{1}{20(N+1)}.\]
In either case we can conclude by Lemma \ref{h_i}.
\end{proof}

\section{Lazy random walks in acylindrically hyperbolic groups}\label{SecLRMAHG}

\subsection{Acylindrically hyperbolic groups and hyperbolically embedded subgroups}\label{SecAHG}
Let $G$ be a group acting on a hyperbolic space $X$ by
isometries.  We say the action of $G$ on $X$ is \emph{non-elementary}
if $G$ contains two hyperbolic elements with disjoint pairs of fixed
points at infinity. We say that $G$ acts \emph{acylindrically} on $X$, if there are real valued functions $R$
and $N$ such that for every number $K \ge 0$, and for any pair of
points $x$ and $y$ in $X$ with $d_X(x, y) \ge R(K)$, there are at most
$N(K)$ group elements $g$ in $G$ such that $d_X(x, gx) \le K$ and
$d_X(y, gy) \le K$.

A group $G$ is \emph{acylindrically hyperbolic} if it admits a (possibly infinite) generating set $Y$ such that the associated Cayley graph ${\rm Cay}(G,Y)$ is hyperbolic and the action of $G$ on ${\rm Cay}(G,Y)$ is non-elementary and acylindrical \cite{Osin}. Such a generating set $Y$ is then called an \emph{acylindrical generating set}.

We now define the notion of hyperbolically embedded subgroup with respect to an acylindrical generating set. It is not the most general notion and not the original definition from \cite{DGO}, but we believe that it is more intuitive.

\begin{defn}(\cite[Theorem 3.9]{AMS}, \cite[Theorem 4.13]{Hull})
Let $G$ be a group and fix a (possibly infinite) generating set $Y$ of $G$ such that ${\rm Cay}(G,Y)$ is hyperbolic. Let $H < G$ be a finitely-generated subgroup and choose a finite generating set $S$ of $H$. Denote by $d_Y$ the word metric on $G$ with respect to $Y$ and by $d_S$ the word metric on $H$ with respect to $S$.
\begin{enumerate}
\item We say that $H$ is \emph{quasi-isometrically embedded} in $G$ with respect to $Y$ if there exist $\mu\geq 1$ and $c\geq 0$ such that for all $h\in H$ one has $d_S(1,h) \leq \mu d_Y(1,h)+c$.
\item  We say that $H$ is \emph{geometrically separated} in $G$ with respect to $Y$ if for every $\e>0$ there exists $R=R(\e)$ such that for $g\in G \setminus H$ we have
 \[{\rm diam}(H \cap \nei{gH}{\e}) < R,\]
where $ \nei{gH}{\e}$ denotes the $\epsilon$-neighborhood of $gH$ with respect to $d_Y$ and the diameter is measured with respect to $d_Y$.
\item We say that $H$ is \emph{hyperbolically embedded} in $G$ with respect to $Y$, denoted $H\h (G,Y)$,  if it is both quasi-isometrically embedded and geometrically separated in $G$ with respect to $Y$.
\end{enumerate}
We say that $H$ is \emph{hyperbolically embedded} in $G$ if it is hyperbolically embedded with respect to some generating set $Y$.
\end{defn}

It is proven in \cite[Theorem 6.14]{DGO} that any acylindrically hyperbolic group $G$ contains a unique maximal finite normal subgroup $K(G)$.

\subsection{Proof of the main theorem}
In order to conclude the proof of Theorem \ref{ThmMain} we are going to use the following result from \cite{MS}. We denote the free group on two generators by $F_2$.
\begin{thm}[Maher--Sisto]
\label{generichypemb}
 Let $G$ be an acylindrically hyperbolic group with acylindrical generating set $Y$ and maximal finite normal subgroup $K(G)$, and let $(x_n),(y_m)$ be independent lazy simple random walks on $G$. Let $A_{m,n}$ be the event that $\langle x_n,y_m,K(G)\rangle$ is
  \begin{itemize}
   \item isomorphic to $F_2\ltimes K(G)$ and
   \item hyperbolically embedded in $(G,Y)$.
  \end{itemize}
 Then $$\lim_{m,n \to \infty}\,\matP(A_{m,n})=1.$$
\end{thm}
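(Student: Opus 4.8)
The plan is to split the statement into two largely independent assertions and handle them separately: (a) the images $\bar x_n,\bar y_m$ of $x_n,y_m$ in the quotient $G/K(G)$ freely generate a copy of $F_2$ with probability tending to $1$, and (b) the subgroup $H := \langle x_n,y_m,K(G)\rangle$ is hyperbolically embedded in $(G,Y)$ with probability tending to $1$. Granting (a), the algebraic conclusion is essentially formal. Since $K(G)$ is normal in $G$, it is normal in $H$, and the projection $\pi\colon G\to G/K(G)$ restricts to a surjection $H\to\langle\bar x_n,\bar y_m\rangle$ whose kernel is exactly $H\cap K(G)=K(G)$. We thus obtain a short exact sequence $1\to K(G)\to H\to F_2\to 1$, and because $F_2$ is free (hence projective among groups) this sequence splits, giving $H\cong F_2\ltimes K(G)$.

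The starting point for both (a) and (b) is the theory of random walks on groups acting non-elementarily on hyperbolic spaces. First I would record that the lazy walk $(x_n)$ converges almost surely to a boundary point of ${\rm Cay}(G,Y)$, that $x_n$ is a loxodromic (WPD, by acylindricity) element with probability tending to $1$, and that its stable translation length grows linearly in $n$ (Maher--Tiozzo, Calegari--Maher, Sisto). By independence of the two walks, the four boundary points $x_n^{\pm},y_m^{\pm}$ are pairwise distinct and in fact uniformly transverse (their pairwise Gromov products stay bounded) with probability tending to $1$. This transversality, together with the linearly growing translation lengths, is the geometric input feeding the ping-pong arguments below.

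For (a) I would run a ping-pong argument directly in ${\rm Cay}(G,Y)$. Fix a basepoint $o$; with high probability $x_n^{\pm1},y_m^{\pm1}$ act as contractions towards $x_n^{\pm},y_m^{\pm}$ on complementary neighbourhoods of these points, and the large translation lengths guarantee that a single application already moves the relevant ping-pong sets into one another. The classical ping-pong lemma then shows that no nontrivial reduced word in $x_n,y_m$ acts trivially, so $\langle\bar x_n,\bar y_m\rangle$ is free of rank $2$. The same estimate shows that the $d_Y$-displacement of $o$ by a reduced word is comparable to its word length, which yields the quasi-isometric-embedding half of hyperbolic embeddedness; combined with the finiteness of $K(G)$ (so that a finite generating set $S$ of $H$ gives a word metric comparable to that of $F_2$), this gives the quasi-isometric embedding of $H$ itself.

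The main obstacle is the geometric-separation condition in (b): for every $\e$ one must produce $R=R(\e)$ with ${\rm diam}(H\cap\nei{gH}{\e})<R$ for all $g\in G\setminus H$. This is precisely where acylindricity, rather than mere non-elementarity, is indispensable. The heuristic is that if a long piece of the orbit $H\cdot o$ ran $\e$-close to a translate $gH\cdot o$, then many group elements would coarsely stabilise a long geodesic segment, contradicting the acylindricity bounds $R(K),N(K)$; the transversality of the random axes is what forces any such near-coincidence of distinct cosets to be short. Making this rigorous requires combining the projection and fellow-travelling estimates for loxodromic quasi-axes with the bounded-coset-penetration type control supplied by acylindricity, and then doing so \emph{uniformly} in the random elements $x_n,y_m$ so that the separation estimate holds with probability tending to $1$. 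I expect this uniform separation estimate, and its promotion to a statement of asymptotically full probability, to be the technical heart of the argument.
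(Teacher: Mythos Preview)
The paper does not prove this theorem at all: it is quoted verbatim from \cite{MS} as a black-box input to the proof of Theorem~\ref{MainRefinement}. So there is no ``paper's own proof'' to compare against; your sketch is an attempt to reprove the cited result rather than to fill a gap in the present paper.

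That said, your outline is broadly the right shape and is in the spirit of what is actually done in \cite{MS}. The reduction of the isomorphism $H\cong F_2\ltimes K(G)$ to freeness in $G/K(G)$ via the split extension is clean and correct. The ping-pong/Maher--Tiozzo package you invoke (almost-sure boundary convergence, generic loxodromicity, linear drift of translation length, independence of the two limit pairs) is exactly the machinery underlying the freeness and quasi-isometric-embedding assertions. Your own assessment is also accurate: the substantive content is the uniform geometric-separation estimate, and your paragraph there is a heuristic, not a proof. In \cite{MS} this is handled not by a direct coset-penetration argument in ${\rm Cay}(G,Y)$ as you suggest, but by appealing to criteria for hyperbolic embeddedness phrased in terms of projection complexes / Bestvina--Bromberg--Fujiwara axioms, together with quantitative control on closest-point projections to random quasi-axes coming from the random-walk estimates; this packaging is what makes the ``uniform in $x_n,y_m$'' issue tractable. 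If you want to complete your sketch, that is the place to look.
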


We are now ready to prove a refinement of Theorem \ref{ThmMain}. Given an acylindrically hyperbolic group with acylindrical generating set $Y$ we denote by ${\rm HE}_2(G, Y)$ the collection of all virtually free-on-two-generators subgroups of $G$ that are hyperbolically embedded in $G$ with respect to $Y$, and by ${\rm HE}_2(G)$ the collection of all virtually free-on-two-generators subgroups of $G$ that are hyperbolically embedded in $G$. Then, by definition, we have $ {\rm HE}_2(G, Y) \subset {\rm HE}_2(G)$ for every acylindrical generating set $Y$, and thus the following generalizes Theorem \ref{ThmMain}.
\begin{thm}\label{MainRefinement}
 Let $G$ be an acylindrically hyperbolic group with acylindrical generating set $Y$. Then for every collection $\mathcal S$ of subgroups of $G$ which contains ${\rm HE}_2(G, Y)$ the natural restriction maps
\[
{\rm res}_{\mathcal S}: H^2_b(G; \R) \to \underset{H \in \mathcal S}{\lim_{\leftarrow}} H^2_b(H; \R) \quad \text{and} \quad {\rm res}_{\mathcal S}: EH^2_b(G; \R) \to \underset{H \in \mathcal S}{\lim_{\leftarrow}} EH^2_b(H; \R) 
\]
are both injective. \label{restr_gen}
\end{thm}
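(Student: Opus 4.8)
The plan is to reduce the injectivity of both restriction maps to the non-vanishing statement proved in Theorem \ref{posprobtwist}, combined with the genericity result of Maher and Sisto (Theorem \ref{generichypemb}). I would first observe that injectivity of ${\rm res}_{\mathcal S}$ is equivalent to the following: if $\alpha \in H^2_b(G;\R)$ (resp.\ $EH^2_b(G;\R)$) satisfies $\alpha|_H = 0$ for every $H \in {\rm HE}_2(G,Y)$, then $\alpha = 0$. Since $\mathcal S \supseteq {\rm HE}_2(G,Y)$, it suffices to prove injectivity for $\mathcal S = {\rm HE}_2(G,Y)$ itself, because restricting to a larger family can only enlarge the collection of constraints and hence shrink the kernel. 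So the whole theorem comes down to showing that a nonzero class restricts nontrivially to \emph{some} member of ${\rm HE}_2(G,Y)$.

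\textbf{Key steps.} First I would fix a nonzero class $\alpha$ and, using Lemma \ref{LemmaBouarich}, represent it by its unique restriction-homogeneous cocycle $c \in Z^2_b(G;\R)$; since $\alpha \neq 0$ we have $c \neq 0$, so $[c] \neq 0 \in H^2_b(G;\R)$. (For the $EH^2_b$ version one may instead take $c = c_f$ for a homogeneous quasimorphism $f$, as noted in the excerpt.) Second, I would finitely generate $G$ for the purpose of running the random walk: the group $G$ is countable but need not be finitely generated, so I would replace $G$ by a finitely generated subgroup $G_0$ containing the support of the lazy walk, or equivalently run the argument with respect to a finitely generated subgroup large enough that Theorem \ref{generichypemb} applies; in practice one fixes a finite symmetric generating set $S \ni e$ of a suitable finitely generated subgroup and considers independent lazy simple random walks $(x_n),(y_m)$ on it. Third, I would apply Theorem \ref{posprobtwist} to $c$ to obtain $\epsilon, m_0, n_0 > 0$ with $\matP(|c(x_n,y_m,e)| \geq \epsilon) \geq \epsilon$ for all $n \geq n_0$, $m \geq m_0$. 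Fourth, I would apply Theorem \ref{generichypemb} to find $n,m$ large so that simultaneously $\matP(A_{m,n})$ is close to $1$ (say larger than $1 - \epsilon/2$) and the twisting probability is at least $\epsilon$. A union-bound argument then yields a positive-probability event on which both $\langle x_n, y_m, K(G)\rangle \cong F_2 \ltimes K(G)$ is hyperbolically embedded in $(G,Y)$ \emph{and} $|c(x_n, y_m, e)| \geq \epsilon$; in particular such a pair $(x_n,y_m)$ exists.

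\textbf{Conclusion and main obstacle.} For such a realization, set $H := \langle x_n, y_m, K(G)\rangle \in {\rm HE}_2(G,Y)$. Since $K(G)$ is finite, $H$ is virtually free on two generators, so $H^2_b(H;\R) = EH^2_b(H;\R)$ and in particular $[c|_H] \in EH^2_b(H)$. As $x_n, y_m, e \in H$ and $c(x_n, y_m, e) \neq 0$, Lemma \ref{HomogeneousNonvanishing} (or its specialization Corollary \ref{HomogeneousNonvanishing2}) shows that $[c|_H] \neq 0$, whence $\alpha|_H \neq 0$ and $\alpha$ is not in the kernel of ${\rm res}_{\mathcal S}$. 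This proves injectivity. The main subtlety I anticipate is bookkeeping between the two probabilistic inputs: Theorem \ref{generichypemb} gives a \emph{limit} statement as $m,n \to \infty$ while Theorem \ref{posprobtwist} gives a lower bound valid for all sufficiently large $m,n$, so I must choose the thresholds in the correct order (first $\epsilon$ from the twisting theorem, then $m,n$ large enough that $\matP(A_{m,n}) > 1 - \epsilon/2$) to guarantee the two events overlap. A secondary point requiring care is the reduction to the finitely generated setting and the verification that the element $H$ produced genuinely lies in the prescribed family ${\rm HE}_2(G,Y)$, i.e.\ that the hyperbolic embedding is with respect to the fixed generating set $Y$ rather than some auxiliary one; this is exactly what the Maher--Sisto theorem supplies.
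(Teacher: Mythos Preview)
Your proposal is correct and follows essentially the same route as the paper: represent the nonzero class by its restriction-homogeneous cocycle (Lemma~\ref{LemmaBouarich}), combine Theorem~\ref{posprobtwist} with Theorem~\ref{generichypemb} via a union bound to produce a pair $(x,y)$ with $c(x,y,e)\neq 0$ and $\langle x,y,K(G)\rangle \in {\rm HE}_2(G,Y)$, and conclude with Corollary~\ref{HomogeneousNonvanishing2}. Your explicit threshold bookkeeping and the finite-generation caveat are more detailed than the paper's terse ``the combination of Theorem~\ref{generichypemb} and Theorem~\ref{posprobtwist} gives\dots'', but the argument is the same.
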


\begin{proof} Note first that the statement for $EH^2_b(G;\R)$ follows from the statement for $H^2_b(G;\R)$. Given $\alpha\in H^2_b(G) \setminus\{0\}$ we have to find $H\in{\rm HE}_2(G, Y)$ so that ${\rm res}_H(\alpha)\neq 0$. 

By Lemma \ref{LemmaBouarich}, $\alpha$ has a restriction-homogeneous representative $c$. (If $\alpha \in EH^2_b(G;\R)$ this just means that $c = c_f$ for a homogeneous quasimorphism $f$.)

The combination of Theorem \ref{generichypemb} and Theorem \ref{posprobtwist} gives that there exist $x,y$ so that
\begin{itemize}
 \item $E=\langle x,y\rangle$ is free on two generators,
 \item $H=E\ltimes K(G)\hookrightarrow_h (G,Y)$,
 \item $c(x,y,e)\neq 0$.
\end{itemize}
Corollary \ref{HomogeneousNonvanishing2} implies that ${\rm res}_E(\alpha)\neq 0$ and hence ${\rm res}_{H}(\alpha)\neq 0$, as required.\end{proof}

\section{A counterexample in degree $3$}\label{SecDegree3}
In this section we prove Proposition \ref{PropDeg3}, which says that our main result does not extend to degree $3$.

It follows from a classical result of Bloch that the third continuous-bounded cohomology $H^3_{cb}({\rm PSL}_2(\mathbb C))$ of ${\rm PSL}_2(\mathbb C)$ is generated by the bounded volume class (see e.g. \cite[Thm. 2]{BBI}). Explicitly, if $\omega$ denotes a volume form on $\mathbb{H}^3$ and $o\in \mathbb H^3$  denotes an arbitrary basepoint, then we obtain a bounded cocycle $c_{\omega, o}$ on ${\rm PSL}_2(\mathbb C)$ by 
\[
c_{\omega, o}(g_0, g_1, g_2, g_3) := \int_{\Delta(g_0.o, g_1.o, g_2.o, g_3.o)} \omega,
\]
where $\Delta(g_0.o, g_1.o, g_2.o, g_3.o) \subset \mathbb H^3$ denotes the geodesic tetrahedron spanned by the points $g_0.o$, $g_1.o$, $g_2.o$ and $g_3.o$. The class ${\rm Vol}_b := [c_{\omega, o}]$ then does not depend on the choice of basepoint, and by the aforementioned result of Bloch generates $H^3_{cb}({\rm PSL}_2(\mathbb C))$. Given a discrete subgroup $G<{\rm PSL}_2(\mathbb C)$ we can consider the restriction $\alpha_G := {\rm Vol}_b|_G \in H^3_b(G)$. Soma \cite{Soma} showed that $\alpha_G\neq 0$ if and only if $G$ is geometrically infinite.

If $G$ is the fiber of the mapping torus of a pseudo-Anosov of a closed surface, then $G$ is geometrically infinite but any infinite index subgroup, in particular any free subgroup, is geometrically finite \cite{ScottSwarup}. In particular, $\alpha_G$ is non-trivial, but its restriction to any free subgroup of $G$ is trivial.

\end{document}